	\let\ssen=\subsetneqq
	\let\wtil=\widetilde
	\let\what=\widehat
	\let\sse=\subseteq
	\let\vtheta=\vartheta
	\def\newmatrix#1{\null\,\vcenter{
			\baselineskip=8pt\mathsurround=-0pt\ialign{
			\hfil ${##}$
			\hfil &&
			\hfil ${##}$
			\hfil \crcr
			\mathstrut \crcr
			\noalign{\kern-\baselineskip}#1 \crcr
			\mathstrut \crcr
			\noalign{\kern-\baselineskip} \crcr }}\!}
	\def\CC{{\mathbb C\kern.5pt}}
	\def\FF{{\mathbb F\kern.5pt}}
	\def\NN{{\mathbb N\kern.5pt}}
	\def\RR{{\mathbb R\kern.5pt}}
	\def\SSs{{\mathbb S\kern.5pt}}
	\def\F{{\mathcal F}}
	\def\Le{{\mathcal L}}
	\def\M{{\mathcal M}}
	\def\N{{\mathcal N}}
 	\def\Se{{\mathcal S}}
	\def\SSe{{\mbox{{$\Se\kern-6pt/$}}\kern.5pt}}
	\def\sSSe{{\mbox{{$\scriptstyle\Se\kern-4.5pt/$}}}}
	\def\R{{\mathcal R}}
	\def\T{{\mathcal T}}
 	\def\U{{\mathcal U\kern.5pt}}
 	\def\V{{\kern-.5pt\mathcal V}}
	\def\W{{\mathcal W}}
	\def\X{{\mathcal X}}
	\def\Y{{\mathcal Y}}
	\def\Z{{\mathcal Z}}
	\def\noi{\noindent}
	\def\0{{\{0\}}}
	\def\x{{\times}}
 	\def\Ps{{\vbox{\hbox{$\wp\kern.3pt$}\vskip.2pt}}}
	\def\vchi{{^{_{\textstyle\kern1pt\chi\kern-1pt}}}}
	\def\emap{\hbox to25pt{\rightarrowfill}}
	\def\nmap{\Big\uparrow}
	\def\diagdownBOX{\hbox{$\diagdown$}}
	\def\searrowBOX{\hbox{\hglue6.5pt$\searrow$}}
	\def\semap{\vbox{\offinterlineskip
			\diagdownBOX\vglue-1pt\searrowBOX\vglue-6pt}}
	\def\span{{\kern.5pt{\rm span}\kern2pt}}
	\def\QED{{\hfill\hfill\qed}}
	\newtheorem{theorem}{Theorem}[section]
	\newtheorem{corollary}{Corollary}[section]
	\newtheorem{proposition}{Proposition}[section]
	\theoremstyle{definition}
	\newtheorem{remark}{Remark}[section]
	\newtheorem{definition}{Definition}[section]
\begin{document}

\vglue-75pt\noi
\hfill{\it Bulletin of the Malaysian Mathematical Sciences Society}\/,
{\bf 44} (2021) 2335--2355

\vglue28pt
\title
[Tensor Products Revisited: Axiomatic Approach]
{Algebraic Tensor Products Revisited: Axiomatic Approach}
\author{C.S. Kubrusly}
\address{Mathematics Institute, Federal University of Rio de Janeiro, Brazil}
\email{carloskubrusly@gmail.com}
\renewcommand{\keywordsname}{Keywords}
\keywords{Bilinear maps, quotient spaces, tensor product}
\subjclass{46M05, 47A80, 15A63}
\date{July 10, 2020; revised, November 24, 2020}

%%%%%%%%%%%%%%%%%%%%%%%%%%%%%%%%%%%%%%%%%%%%%%%%%%%%%%%%%  ABSTRACT
\begin{abstract}
This is an expository paper on tensor products where the standard approaches
for constructing concrete instances of algebraic tensor products of linear
spaces, via quotient spaces or via linear maps of bilinear maps, are reviewed
by reducing them to different but isomorphic interpretations of an abstract
notion, viz., the universal property, which is based on a pair of axioms.
\end{abstract}

\maketitle
\kern-20pt

%%%%%%%%%%%%%%%%%%%%%%%%%%%%%%%%%%%%%%%%%%%%%%%%%%%%%%%%%  SECTION 1
\section{Introduction}

The purpose of this paper is to offer a brief and unified review with an
expository flavor on the common realizations of algebraic tensor products
(either via quotient space or via linear maps of bilinear maps) by reversing
the presentation order$.$ In other words, this exposition focuses on the
approach where the so-called universal property is taken as an axiomatic
starting point, rather than as a theorem for a specific realization$.$ This
leads to the abstract notion of algebraic tensor products of linear spaces
(the pre-crossnorm stage), where the concrete standard forms are shown to
be interpretations of the axiomatic formulation.

\vskip4pt
The origin of a systematic presentation of tensor products in book form dates
back to Schatten's 1950 monograph \cite{Sch}, where the notion of {\it direct
product}\/ of linear spaces was given in terms of {\it formal products}\/
(which match what its now called {\it tensor product space}\/ and {\it single
tensors}\/) and their symbols in the form of finite sum of formal products$.$
This followed a Kronecker-product-like notion on finite-dimensional spaces
given in Weyl's 1931 book \cite[Chapter V]{Wey}$.$ Grothendieck's fundamental
work in the 1950's has been unified and updated by Diestel, Fourie and Swart
in 2008 \cite{DFS}$.$ See also Pisier's 2012 exposition \cite{Pis}$.$ In
Grothendieck's pioneering work, the notion of tensor product space was
essentially given in terms of the dual of the linear space of bilinear forms
(or, more generally, the linear space of linear maps of bilinear maps)$.$ The
same way of defining tensor product also appears in \hbox{Halmos's} 1958 book
on finite-dimensional vector spaces \cite[Section 24]{Hal} (although the
formal products variant is also mentioned as a possible alternative)$.$
Another representative along this line (dual of the linear space of bilinear
forms) is Ryan's 2002 book on tensor products of Banach spaces \cite{Rya}.

\vskip4pt
On the other hand, a different but still usual approach for defining tensor
product relies on quotient spaces of free linear spaces
$\kern-.5pt$(equivalent to the linear space of formal linear
combinations)$\kern-.5pt$ of Cartesian products of linear spaces$.\!$ This has
been \hbox{sometimes} referred to as {\it algebraic tensor product}\/
(although the previous approach is equally algebraic)$.$ See e.g.,
\cite[pp.22--25]{BP}, \cite[Section 3.4]{Wei} and \cite[Chapter 14]{Rom} for a
linear space version, and \cite[Section IX.8]{MB} and
\cite[Section XVI.1]{Lan}) for a module version.

\vskip4pt
The present paper is organized as follows$.$ Notation, terminology and
auxiliary results are brought together in Section 2$.$ This is split into
three parts$:\kern-1pt$ formal linear combination, quotient space, and
bilinear maps$.$ Tensor products are axiomatically defined in Section 3, and
common properties shared by the concrete formulations are obtained from such
an abstract formulation$.$ The usual realizations, viz., the quotient space
and linear maps of bilinear maps, are individually inferred from
Definition 3.1 and are considered in Sections 4 and 5.

%%%%%%%%%%%%%%%%%%%%%%%%%%%%%%%%%%%%%%%%%%%%%%%%%%%%%%%%%  SECTION 2
\section{Notation, Terminology, and Auxiliary Results}

Let $\X$ and $\Y$ be linear spaces over the same field $\FF$, and let
$\Le[{\X,\Y}]$ denote the linear space over $\FF$ of all linear transformations
from $\X$ to $\Y.$ For ${\X=\Y}$ write $\Le[\X]=\Le[\X,\X]$, which is the
algebra of all linear transformations of $\X$ into itself$.$ The kernel
${\N(L)=L^{-1}(\0)}$ and range ${\R(L)=L(\X)}$ of ${L\in\Le[\X,\Y]}$ are linear
manifolds of $\X$ and $\Y$ respectively$.$ A linear transformation
${L\in\Le[\X,\Y]}$ is injective if $\N(L)=\0$ and surjective if $\R(L)=\Y.$ By
an {\it isomorphism}\/ (or an {\it algebraic isomorphism}\/, or a {\it
linear-space isomorphism}\/) we mean an invertible (i.e., injective and
surjective) linear transformation$.$ Two linear spaces $\X$ and $\Y$ are
{\it isomorphic}\/ (notation ${\X\cong\Y}$) if there is an isomorphism
between them$.$ For the particular case of ${\Y=\FF}$ the elements of
$\Le[{\X,\FF}]$ are referred to as {\it linear functionals}\/ or {\it linear
forms}\/, and the linear space $\Le[{\X,\FF}]$ of all linear functionals on
$\X$ is referred to as the {\it algebraic dual}\/ of $\X$, denoted by
$\X^\sharp$ (i.e., ${\X^\sharp=\Le[{\X,\FF}]}).$ For an arbitrary linear
transformation ${L\in\Le[\X,\Y]}$ consider the linear transformation
${L^\sharp\in\!\Le[\Y^\sharp\!,\X^\sharp]}$ defined by
${L^\sharp g=g L\in\X^\sharp}$ for every ${g\in\!\Y^\sharp}\!$
(i.e., ${(L^\sharp g)(x)=g(Lx)\in\FF}$ for every ${g\in\!\Y^\sharp}$ and
every ${x\in\!\X}$ --- we use both notations ${gL}$ or ${g\circ L}$ for
composition)$.$ This ${L^\sharp\in\!\Le[\Y^\sharp\!,\X^\sharp]}$ is the
{\it algebraic adjoint}\/ of ${L\in\Le[\X,\Y]}$.

\vskip4pt
The next subsections summarize not only notation and terminology, but also
auxiliary results that will be required in Sections 4 and 5.

%%%%%%%%%%%%%%%%%%%%%%%%%%%%%%%%%%%%%%%%%%%  SUBSECTION 2.1
\subsection{Formal Linear Combination}
Let $S$ be an arbitrary nonempty set and let $\FF$ be a field$.$ Consider the
linear space $\FF^S\!$ of all scalar-valued functions ${f\!:S\to\FF}$ on $S.$
Let $\#$ stand for cardinality and consider the set
$$
\SSe=\big\{f\in\FF^S\!:f(S\\A)=0
\;\hbox{for some}\;A\sse S\;\hbox{with}\;\#A<\infty\big\}
$$
of all functions ${f\!:S\to\FF}$ which vanish everywhere on the complement
of some finite subset $A$ of $S$ (which depends on $f).$ This $\SSe$ is a
linear manifold of $\FF^S\!$, and so is itself a linear space over $\FF.$ For
each ${s\in S}$ take the characteristic function
${e_s=\vchi_{\{s\}}\!:S\to\FF}$ of the singleton ${\{s\}\sse S}.$ As is
readily verified the set
$$
\hbox{{\small$\SSs$} $=\{e_s\}_{s\in S}$ is a Hamel basis for the linear
space $\SSe$}.
$$
Thus an arbitrary vector ${f\in\SSe}$, being a scalar-valued function taking
nonzero values only over a finite subset $\{s_i\}_{i=1}^n$ of $S$, has a
unique expansion with ${\alpha_i\in\FF}$:
$$
f={\sum}_{i=1}^n\alpha_ie_{s_i}\,\in\,\SSe.
$$
The linear space $\SSe$ is called the {\it free linear space generated by}\/
$S.$ Since $\#\{e_s\}_{s\in S}=$ $\#S$ (i.e., each element $s$ from the set
$S$ is in a one-to-one correspondence with each function $e_s$ from the Hamel
basis \hbox{\small$\SSs$} for the function space $\SSe$), then
${\dim\SSe\kern-1pt=\kern-1pt\#\hbox{\small$\SSs$}\kern-1pt=\kern-1pt\#S}.$
This sets a natural identification $\approx$ such that ${s\approx e_s}$ and so
$S\approx\hbox{\small$\SSs$}$, which in turn leads to a natural identification
for an arbitrary linear combination in $\SSe$,
$$
{\sum}_{i=1}^n\alpha_ie_{s_i}\approx{\sum}_{i=1}^n\alpha_is_i,
$$
where ${\sum}_{i=1}^n\alpha_is_i$ is referred to as a {\it formal linear
combination of points}\/ ${s_i\in S}$ (although addition or scalar
multiplication is not directly defined on the set $S$), the collection of
which is the {\it linear space of formal linear combinations from}\/ $S.$ So
any function $f$ in the linear space $\SSe$ is identified with a formal linear
combination of points in $S$, and the set $S$ that generates the free linear
space $\SSe$ is identified with the Hamel basis \hbox{\small$\SSs$} for $\SSe.$
In this sense the set $S$ may be regarded as a subset of $\SSe$, and a
function $f$ in $\SSe$ may be regarded as a formal linear combination$.$ Thus
write
$$
f={\sum}_{i=1}^n\alpha_is_i
\;\;\;\hbox{for}\;\;\;
{\sum}_{i=1}^n\alpha_is_i\approx f\in\SSe
\qquad\hbox{and}\qquad
S\subset\SSe
\;\;\;\hbox{for}\;\;\;
S\approx\hbox{\small$\SSs$}\subset\SSe.
$$

%%%%%%%%%%%%%%%%%%%%%%%%%%%%%%%%%%%%%%%%%%%  SUBSECTION 2.2
\subsection{Quotient Space}
Let $\M$ be a linear manifold of a linear space $\X$ over a field $\FF$, let
$[x]=x+\M$ denote the coset of ${x\in\X}$ modulo $\M$, and let $\X/\M$ stand
for the quotient space of $\X$ modulo $\M$, which is the linear space over
$\FF$ of all cosets $[x]$ modulo $\M$ for every ${x\in\X}.$ Consider the
{\it quotient map}\/ (or the {\it natural quotient map}\/)
${\pi\!:\X\to\X/\M}$ of the linear space $\X$ onto the linear space $\X/\M$,
defined by
$$
\pi(x)=[x]=x+\M
\,\;\;\hbox{for every}\;\;
x\in\X,
$$
which is a surjective linear transformation according to the usual definition
of addition and scalar multiplication in $\X/\M$.

%%%%%%%%%%%%%%%%%%%%%%%%%%%  PROPOSITION 2.1
\begin{proposition}
{\rm Universal Property.}
Let\/ $\X$ and\/ $\Z$ be linear spaces over the same field, let\/ $\M$
be a linear manifold of\/ $\X$, consider the quotient space\/ ${\X/\M}$, and
take the natural quotient map\/ ${\pi\! :\X\to\X/\M}.$ If\/ ${L\in\Le[\X,\Z]}$
and if\/ ${\M\sse\N(L)}$, then there exists a unique\/
${\what L\in\Le[\X/\M,\Z]}$ such that
$$
L=\what L\circ\pi.
$$
\vskip-4pt\noi
In other words, the diagram
\vskip-2pt\noi
$$
\newmatrix{
\X & \kern2pt\buildrel L\over\emap & \kern-1pt\Z                      \cr
   &                               &                                  \cr
   & \kern-3pt_\pi\kern-3pt\semap  & \kern4pt\nmap\scriptstyle\what L \cr
   &                               & \phantom{;}                      \cr
   &                               & \kern-2pt\X/\M                   \cr}
$$
commutes, which means the quotient map\/ $\pi$ factors the linear
transformation $L$ through\/ $\X/\M.$ Moreover, in this case\/
${\N(\what L)=\N(L)/\M}$ and\/ ${\R(\what L)=\R(L)}$.
\end{proposition}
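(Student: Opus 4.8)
The plan is to construct $\what L$ explicitly on cosets, show it is well defined precisely because $\M \sse \N(L)$, verify linearity, confirm the factorization $L = \what L \circ \pi$, establish uniqueness from surjectivity of $\pi$, and finally compute the kernel and range.

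First I would define $\what L\!:\X/\M \to \Z$ by the natural formula $\what L([x]) = Lx$ for each coset $[x] = x + \M$. The main obstacle, and really the only substantive point, is well-definedness: a coset $[x]$ has many representatives, so I must check that the value $Lx$ does not depend on the chosen representative. If $[x] = [x']$, then $x - x' \in \M \sse \N(L)$, whence $L(x - x') = 0$ and so $Lx = Lx'$; this is exactly where the hypothesis $\M \sse \N(L)$ is used, and it is the crux of the whole argument. Once $\what L$ is seen to be a genuine function, linearity follows routinely from the linearity of $L$ together with the definition of the linear operations on $\X/\M$, since $\what L([x] + \alpha[x']) = \what L([x + \alpha x']) = L(x + \alpha x') = Lx + \alpha Lx' = \what L([x]) + \alpha\, \what L([x'])$. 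The factorization is then immediate: for every $x \in \X$ we have $(\what L \circ \pi)(x) = \what L([x]) = Lx$, so $L = \what L \circ \pi$, which makes the diagram commute.

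For uniqueness I would use that $\pi$ is surjective. Suppose $\what L_1, \what L_2 \in \Le[\X/\M, \Z]$ both satisfy $L = \what L_i \circ \pi$. Then $\what L_1 \circ \pi = \what L_2 \circ \pi$, and since every element of $\X/\M$ is of the form $\pi(x) = [x]$ for some $x \in \X$, we get $\what L_1([x]) = \what L_2([x])$ for all cosets, hence $\what L_1 = \what L_2$.

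Finally I would identify the kernel and range. For the range, $\R(\what L) = \what L(\X/\M) = \what L(\pi(\X)) = (\what L \circ \pi)(\X) = L(\X) = \R(L)$, again using surjectivity of $\pi$. For the kernel, I would argue that $[x] \in \N(\what L)$ iff $Lx = \what L([x]) = 0$ iff $x \in \N(L)$, so that $\N(\what L) = \{[x] : x \in \N(L)\} = \N(L)/\M$; this last identification makes sense precisely because $\M \sse \N(L)$, so $\M$ is a linear manifold of $\N(L)$ and the quotient $\N(L)/\M$ is defined as a subspace of $\X/\M$. I expect no real difficulty beyond the well-definedness step; the remaining verifications are direct consequences of the definitions and the surjectivity of $\pi$.
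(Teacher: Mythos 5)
Your proof is correct and complete; the paper itself does not prove this proposition but merely cites \cite[Theorems 3.4, 3.5]{Rom}, and your argument is exactly the standard one found there: define $\what L([x])=Lx$, use $\M\sse\N(L)$ for well-definedness, get uniqueness from surjectivity of $\pi$, and read off $\N(\what L)=\N(L)/\M$ and $\R(\what L)=\R(L)$ directly. Nothing is missing.
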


\begin{proof}
See, e.g., \cite[Theorem 3.4, 3.5]{Rom}$.$ (For a module version see, e.g.,
\cite[Theorem V.4.7]{MB}; for a normed space version see, e.g.,
\cite[Theorem 1.7.13]{Meg}).
\end{proof}

%%%%%%%%%%%%%%%%%%%%%%%%%%%  REMARK 2.1
\begin{remark}
Let $\M$ be linear a manifold of a linear space $\X.$ A set
$\{\kern1pt[\hbox{e}_\delta]\}_{\delta\in\Delta}$ of cosets modulo $\M$ (with
each $\hbox{e}_\delta$ in $\X$) is a Hamel basis for the linear space $\X/\M$
if and only if $\{\hbox{e}_\delta\}_{\delta\in\Delta}$ is a Hamel basis for
some algebraic complement of $\M$ (see, e.g., \cite[Remark A.1(b)]{ST2})$.$
Hence every Hamel basis $\{e_\gamma\}_{\gamma\in\Gamma}$ for $\X$ is such that
$\{\kern1pt[\hbox{e}_\gamma]\}_{\gamma\in\Gamma}$ includes a Hamel basis for
$\X/\M.$ Since the quotient map ${\pi\!:\X\to\X/\M}$ is surjective, the image
$\pi(E_\X)$ of an arbitrary Hamel basis $E_\X$ for $\X$ includes a Hamel basis
$E_{\X/\M}$ for $\X/\M$ (i.e., $E_{\X/\M}\sse \pi(E_\X)).$ Therefore
$$
\span\pi(E_\X)=\X/\M
\quad
\hbox{for every Hamel basis $E_\X$ for $\X$},
$$
and so every element $[x]$ of $\X/\M$ can be written as a (finite) linear
combination of images under $\pi$ of elements of an arbitrary Hamel basis
for $\X$.
\end{remark}

%%%%%%%%%%%%%%%%%%%%%%%%%%%%%%%%%%%%%%%%%%%  SUBSECTION 2.3
\subsection{Bilinear Maps}
Let $\X$, $\Y$, $\Z$ be nonzero linear spaces over a field $\FF.$ Take the
Cartesian product ${\X\x\Y}$ (no algebraic structure imposed on ${\X\x\Y}$
besides the fact that both $\X$ and $\Y$ are linear spaces)$.$ A
{\it bilinear map}\/ ${\phi\!:\X\x\Y\to\Z}$ is a function from the Cartesian
product ${\X\x\Y}$ of linear spaces to a linear space $\Z$ whose sections
are linear transformations. Precisely, let
$\phi^y\!=\phi(\cdot,y)={\phi|_{\X\x\{y\}}\!:\X\!\to\Z}$ be the $y$-section of
$\phi$ and let $\phi_x\!=\phi(x,\cdot)={\phi|_{\{x\}\x\Y}\!:\Y\to\Z}$ be the
$x$-section of $\phi.$ These functions $\phi^y$ and $\phi_x$ are linear
transformations: $\phi^y={\phi(\cdot,y)\in\Le[\X,\Z]}$ for each $y$ in $\Y$ and
$\phi_x={\phi(x,\cdot)\in\Le[\Y,\Z]}$ for each $x$ in $\X.$ Let $\Z^S\!$ denote
the linear space over the same field $\FF$ of all $\Z$-valued functions on a
set $S$, and let
$$
b[\X\x\Y,\Z]
=\big\{\phi\in\kern-1pt\Z^{\X\x\Y}\!\!:\phi\;\hbox{is bilinear}\big\}
$$
stand for the collection of all $\Z$-valued bilinear maps on ${\X\x\Y}.$ The
particular case of $\Z=\FF$ yields a {\it bilinear functional}\/
${\phi\!:\X\x\Y\to\FF}$, also referred to as a {\it bilinear form}\/$.$
Bilinearity of elements $\phi$ in $b[\X\x\Y,\Z]$ ensures $b[\X\x\Y,\Z]$ is a
linear manifold of the linear space $\Z^{\X\x\Y}\!$, thus a linear space over
$\FF$ itself$.$ Let $R(\phi)=\phi(\X\x\Y)$ denote the range of
${\phi\in b[\X\x\Y,\Z]}$, which in general is not a linear manifold of $\Z$.

\vskip4pt
$\!$As a composition of linear transformations is a linear transformation, a
composi\-tion of a bilinear map with a linear transformation is a bilinear
map$.$ Also, a restriction of a bilinear map to a Cartesian product of
linear manifolds is again a bilinear map (as a consequence of
the definitions of linear manifold and of bilinear map)$.$

%%%%%%%%%%%%%%%%%%%%%%%%%%%  PROPOSITION 2.2
\begin{proposition}
Let\/ ${\X,\Y,\Z}$ be linear spaces over the same field\/ $\FF$,
and let\/ $\M$ and\/ $\N$ be nonzero linear manifolds of\/ $\X$ and\/ $\Y.$
If\/ ${\phi\!:\M\x\N\to\Z}$ is a bilinear map, then there exists a bilinear
extension\/ ${\what\phi\!:\X\x\Y\to\Z}$ of\/ $\phi$ defined on the whole
Cartesian product\/ ${\X\x\Y}$ of the linear spaces\/ $\X$ and\/ $\Y$.
\end{proposition}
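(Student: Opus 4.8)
The plan is to reduce the extension problem to the existence of algebraic complements, combined with the elementary remarks recorded just above the statement: that the sections of a bilinear map are linear transformations, and that a composition of linear transformations is linear. The guiding idea is to push every point of $\X$ back into $\M$ and every point of $\Y$ back into $\N$ by means of linear projections, and then feed the results to the original map $\phi$.

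First I would invoke the standard fact that every linear manifold admits an algebraic complement, obtained by extending a Hamel basis of the manifold to a Hamel basis of the whole space (cf.\ the Remark above). Thus I would fix algebraic complements so that ${\X=\M\oplus\V}$ and ${\Y=\N\oplus\W}$ as internal direct sums, and let ${P\!:\X\to\M}$ be the projection of $\X$ onto $\M$ along $\V$ and ${Q\!:\Y\to\N}$ the projection of $\Y$ onto $\N$ along $\W.$ These are linear transformations with ${\R(P)=\M}$ and ${\R(Q)=\N}$, and, crucially, ${Pm=m}$ for every ${m\in\M}$ and ${Qn=n}$ for every ${n\in\N}$.

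Next I would define ${\what\phi\!:\X\x\Y\to\Z}$ by ${\what\phi(x,y)=\phi(Px,Qy)}$, which is meaningful precisely because ${Px\in\M}$ and ${Qy\in\N}$ lie in the domain of $\phi.$ To verify bilinearity I would examine the sections: for fixed $y$ the $y$-section is ${\what\phi(\cdot,y)=\phi^{Qy}\circ P}$, the composition of the linear map ${P\in\Le[\X,\M]}$ with the linear section ${\phi^{Qy}=\phi(\cdot,Qy)\in\Le[\M,\Z]}$, hence itself linear; symmetrically, each $x$-section equals ${\phi_{Px}\circ Q}$ and is linear. Therefore $\what\phi$ is bilinear on ${\X\x\Y}.$ Finally, restricting to ${\M\x\N}$ and using ${Pm=m}$, ${Qn=n}$ gives ${\what\phi(m,n)=\phi(Pm,Qn)=\phi(m,n)}$, so $\what\phi$ is indeed an extension of $\phi$.

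The construction is routine once the complements are in hand, so the only genuinely non-elementary ingredient --- and hence the real obstacle --- is the existence of the algebraic complements $\V$ and $\W$, which rests on the axiom of choice via Hamel-basis extension. I would also remark that the extension produced is far from unique, as it depends on the arbitrary choice of $\V$ and $\W$, every such choice yielding a valid $\what\phi$; the statement asserts only existence, which is all that is needed.
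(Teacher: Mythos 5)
Your construction is correct and complete: the complements $\V$ and $\W$ exist by Hamel-basis extension, the projections $P$ and $Q$ along them are linear with $Pm=m$ on $\M$ and $Qn=n$ on $\N$, the sections of $\what\phi(x,y)=\phi(Px,Qy)$ are compositions of linear maps (the projections followed by the linear sections of $\phi$) and hence linear, and the restriction to $\M\x\N$ recovers $\phi$. This is, however, a genuinely different route from the paper's. The paper only sketches a proof: it recalls that a linear transformation on a linear manifold extends to the whole space by a Zorn's Lemma argument on partial extensions, handles the special case where the bilinear map is a product of two linear maps by applying that linear result, and then asserts that ``the general bilinear case follows an argument similar to the linear case'' --- i.e., it envisages running a transfinite/maximality argument directly on partial bilinear extensions, which it does not carry out. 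You instead bypass any Zorn-type argument on the bilinear map itself and reduce everything to a single use of the existence of algebraic complements, obtaining the explicit formula $\what\phi=\phi\circ(P\x Q)$ (a pre-composition, verified slotwise). What your approach buys is a complete, checkable proof with an explicit extension (one that vanishes whenever $x\in\V$ or $y\in\W$) and a transparent accounting of where choice enters; what the paper's sketch gestures at is a scheme that would generalize to settings where no complemented decomposition is available, but as written it leaves the general case to the reader. Your closing remarks on non-uniqueness and on the axiom of choice being the only non-elementary ingredient are both accurate.
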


\vskip0pt\noi
{\it Sketch of Proof}\/.
Every linear transformation on a linear manifold of a linear space has a
linear extension to the whole space, whose proof is an application of
Zorn's Lemma (see, e.g., \cite[Theorem 2.9]{EOT})$.$ If a bilinear map
is the product of two (alge\-bra-valued) linear maps, then the proof is an
application of the linear case$.$ A proof for the general bilinear case
follows an argument similar to the linear case. \QED

\vskip6pt
Bilinear maps can also be extended by factoring them by the natural bilinear
map through a tensor product space (see, e.g., \cite[p.101]{Hay})$.$
Indeed, Proposition 3.3 will say that ${b[\X\x\Y,\Z]\cong\Le[\X\otimes\Y,\Z]}$,
where ${\X\otimes\Y}$ stands for tensor product, and this ensures bilinear
extension out of linear extension. It is, however, advisable to have the
above extension result independently of the notion of tensor product.

%%%%%%%%%%%%%%%%%%%%%%%%%%%%%%%%%%%%%%%%%%%%%%%%%%%%%%%%%  SECTION 3
\section{Tensor Product of Linear Spaces: Axiomatic Theory}

%%%%%%%%%%%%%%%%%%%%%%%%%%%  DEFINITION 3.1
\begin{definition}
Let $\X$ and $\Y$ be nonzero linear spaces over the same field $\FF.$ A
{\it tensor product of\/ $\X$ and}\/ $\Y\,$ is a pair ${(\T,\theta)}$
consisting of a linear space $\T$ over $\FF$ and a bilinear map
${\theta\!:\X\x\Y\to\T}$ fulfilling the following two axioms.
\vskip4pt\noi
(a)
The bilinear map ${\theta\in b[\X\x\Y,\T]}$ is such that its range
$R(\theta)$ spans $\T$.
\vskip4pt\noi
(b)
If ${\phi\in b[\X\x\Y,\Z]}$ is an arbitrary bilinear map into a linear space
$\Z$ over $\FF$, then there is a linear transformation
${\Phi\in\Le[\T,\Z]}$ for which
$$
\phi=\Phi\circ\theta.
$$
\vskip-4pt\noi
That is, the diagram
\vskip-2pt\noi
$$
\newmatrix{
\X\x\Y & \kern2pt\buildrel\phi\over\emap & \kern-1pt\Z                   \cr
       &                                 &                               \cr
       & \kern-3pt_\theta\kern-3pt\semap & \kern4pt\nmap\scriptstyle\Phi \cr
       &                                 & \phantom{;}                   \cr
       &                                 & \kern-2pt\T                   \cr}
$$
commutes, and so $\theta$ factors every bilinear map through $\T.$ The linear
space $\T$ is referred to as a {\it tensor product space of $\X$ and}\/ $\Y$
associated with $\theta$, and $\theta$ is referred to as the\/ {\it natural
bilinear map}\/ (or simply the {\it natural map}\/) associated with $\T$.
\end{definition}

There are different interpretations of tensor products. For instance, the
quotient space and the linear maps of bilinear maps formulations are examples
of common procedures for building tensor products$.$ These will be shown to be
isomorphic, and will be exhibited in the next two sections$.$ So the existence
of tensor products will be postponed until then$.$ Definition 3.1 is our
starting point for providing these interpretations$.$ A similar start has been
considered, for instance, in \cite[Section XVI.1]{Lan} and
\cite[Chapter 14]{Rom} for the quotient space formulation, in
\cite[Section 1.4]{Yok} and \cite[Chapter1]{Gre} for both formulations, and in
\cite[Section 1.6]{Jar} and \cite[Section 2.2]{DF} for the linear maps of
bilinear maps formulation aiming at tensor norms.

\vskip4pt
The value $\theta(x,y)$ of the natural bilinear map
${\theta\!:\X\x\Y\to\T=\span\theta(\X\x\Y)}$ associated with $\T$ at a pair
${(x,y)}$ in ${\X\x\Y}$ is denoted by ${x\otimes y}$\/,
$$
x\otimes y=\theta(x,y)\,\in\,\T
\quad\;\hbox{for every}\;\quad
(x,y)\in\X\x\Y,
$$
and ${x\otimes y}$ is called a {\it single tensor}\/ (or a {\it decomposable
element}\/, or a {\it single tensor product of $x$ and}\/ $y$) in the tensor
product space $\T.$ Take ${(x,y)\in\X\x\Y}$ and ${\alpha\in\FF}$ arbitrary$.$
Since $\theta$ is bilinear, then
${\alpha(x\otimes y)}={(\alpha\kern.5ptx)\otimes y}
={x\otimes(\alpha\kern.5pty)}$
for any ${\alpha\in\FF}.$ So a multiple of a single tensor is again a single
tensor, and the representation of a nonzero single tensor is not unique$.$
Proofs for the next two propositions are straightforward form Definition 3.1,
thus omitted.

%%%%%%%%%%%%%%%%%%%%%%%%%%%  PROPOSITION 3.1
\begin{proposition}
An element of a tensor product space is represented as a finite sum
of single tensors\/ $($and such a representation is not unique$)$\/:
\vskip0pt\noi
$$
\digamma\in\T
\quad\iff\quad
\digamma={\sum}_i x_i\otimes y_i
\quad(\hbox{{\it a finite sum}}).
$$
\end{proposition}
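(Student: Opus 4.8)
The plan is to prove both implications of the biconditional directly from the two axioms in Definition 3.1. The statement claims that an element $\digamma$ belongs to the tensor product space $\T$ if and only if it can be written as a finite sum of single tensors $\sum_i x_i\otimes y_i$.

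First I would establish the backward implication ($\Leftarrow$), which is immediate. Each single tensor $x_i\otimes y_i=\theta(x_i,y_i)$ lies in $\T$ by definition, since $\theta\!:\X\x\Y\to\T$ takes values in $\T$. As $\T$ is a linear space, any finite sum $\sum_i x_i\otimes y_i$ of elements of $\T$ is again an element of $\T$. This direction uses only that $\T$ is closed under finite addition and that single tensors are elements of $\T$.

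The forward implication ($\Rightarrow$) is where the real content lies, and it rests on Axiom (a). Suppose $\digamma\in\T$. By Axiom (a), the range $R(\theta)=\theta(\X\x\Y)$ spans $\T$, i.e.\ $\T=\span R(\theta)$. By the definition of span, every element of $\T$ is a finite linear combination of elements of $R(\theta)$: thus $\digamma=\sum_i\alpha_i\,\theta(u_i,v_i)=\sum_i\alpha_i\,(u_i\otimes v_i)$ for finitely many scalars $\alpha_i\in\FF$ and pairs $(u_i,v_i)\in\X\x\Y$. To convert this linear combination into a pure sum of single tensors, I would absorb each scalar $\alpha_i$ into the tensor using bilinearity of $\theta$. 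Since $\alpha_i(u_i\otimes v_i)=(\alpha_i u_i)\otimes v_i$, setting $x_i=\alpha_i u_i$ and $y_i=v_i$ yields $\digamma=\sum_i x_i\otimes y_i$, a finite sum of single tensors.

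The main obstacle, if any, is merely the bookkeeping in the forward direction: one must recall that $\span$ produces linear combinations rather than plain sums, and then invoke the scalar-absorption property of single tensors (noted in the text immediately before the proposition) to clean up the coefficients. Finally, the non-uniqueness remark is justified by the same scalar-absorption identity together with $\alpha(x\otimes y)=x\otimes(\alpha y)$: the same element admits genuinely different representations, so the representation is not canonical. No use of Axiom (b) is needed for this proposition.
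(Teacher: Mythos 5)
Your proof is correct and is precisely the straightforward argument from Definition 3.1 that the paper has in mind when it omits the proof: axiom (a) gives $\T=\span R(\theta)$, and bilinearity of $\theta$ absorbs the scalars into the single tensors. Nothing is missing, and your observation that axiom (b) plays no role here is accurate.
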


%%%%%%%%%%%%%%%%%%%%%%%%%%%  PROPOSITION 3.2
\begin{proposition}
If\/ $\T$ is a tensor product space, then the linear transformation\/
${\Phi\in\Le[\T,\Z]}$ associated with each ${\phi\in b[\X\x\Y,\Z]}$ as in
Definition 3.1 is unique\/.
\end{proposition}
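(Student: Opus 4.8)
The plan is to establish uniqueness by exploiting axiom (a), which guarantees that the range of the natural bilinear map spans the whole tensor product space. Uniqueness will then reduce to the elementary principle that a linear transformation is completely determined by its values on a spanning set. The strategy is to suppose two candidates exist, form their difference, and show it must be the zero transformation.

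First I would assume, toward uniqueness, that both $\Phi_1$ and $\Phi_2$ in $\Le[\T,\Z]$ satisfy the factorization required in Definition 3.1(b), that is, ${\phi=\Phi_1\circ\theta}$ and ${\phi=\Phi_2\circ\theta}$. Setting ${D=\Phi_1-\Phi_2\in\Le[\T,\Z]}$, which is again a linear transformation, I would observe that ${D\circ\theta=\Phi_1\circ\theta-\Phi_2\circ\theta=\phi-\phi=0}$. Evaluating at an arbitrary pair ${(x,y)\in\X\x\Y}$ then gives ${D(x\otimes y)=D(\theta(x,y))=0}$, so $D$ annihilates every single tensor; equivalently, $D$ vanishes on the range ${R(\theta)=\theta(\X\x\Y)}$.

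Next I would invoke axiom (a): since $R(\theta)$ spans $\T$, Proposition 3.1 applies, so every element ${\digamma\in\T}$ is a finite sum of single tensors, ${\digamma=\sum_i x_i\otimes y_i}$. Because $D$ is linear and vanishes on each summand, ${D(\digamma)=\sum_i D(x_i\otimes y_i)=0}$. Hence $D$ vanishes on all of $\T$, which is to say ${\Phi_1=\Phi_2}$, proving uniqueness.

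I do not expect any genuine obstacle here; the result is a direct consequence of linearity combined with the spanning axiom, and this is precisely why the proofs of Propositions 3.1 and 3.2 can be declared straightforward and omitted. The only point that deserves attention is the passage from ``$D$ vanishes on the single tensors'' to ``$D$ vanishes on $\T$'', and this is exactly where axiom (a) (equivalently, Proposition 3.1) is indispensable: without the spanning property one could conclude only that the two transformations agree on the decomposable elements, not on the entire tensor product space.
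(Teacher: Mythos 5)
Your argument is correct and is precisely the standard reasoning the paper has in mind: the paper omits the proof of Proposition 3.2 as ``straightforward from Definition 3.1,'' and your reduction of uniqueness to linearity plus the spanning axiom (a) (via Proposition 3.1) is exactly that straightforward argument. No gaps; the difference $D=\Phi_1-\Phi_2$ vanishing on $R(\theta)$ and hence on $\span R(\theta)=\T$ is the whole point.
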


The natural bilinear map $\theta$ associated with a tensor product space $\T$
is unique and, conversely, $\T$ associated with $\theta$ is unique$.$ This is
shown in the next theorem and its corollary$.\!$ From now on all linear
spaces anywhere are over the same field $\FF\kern-1pt$.

%%%%%%%%%%%%%%%%%%%%%%%%%%%  THEOREM 3.1
\begin{theorem}
Let\/ $\X$ and\/ $\Y$ be linear spaces$.$ Let\/ ${(\T,\theta)}$ and\/
${(\T',\theta')}$ be tensor products of\/ $\X$ and\/ $\Y.$ Then there is a
unique isomorphism\/ ${\Theta\in\Le[\T',\T]}$ such that\/
${\Theta\circ\theta'=\theta}.$ That is, such that the following diagram
commutes\/:
\vskip1pt\noi
$$
\newmatrix{
\X\x\Y & \kern2pt\buildrel\theta\over\emap & \kern-1pt\T                 \cr
   &                                    &                                \cr
   & \kern-3pt_{\theta'}\kern-3pt\semap &\kern4pt\nmap\scriptstyle\Theta \cr
   &                                    & \phantom{;}                    \cr
   &                                    & \kern-2pt\T'.                  \cr}
$$
\end{theorem}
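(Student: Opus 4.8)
The plan is to exploit the universal property (axiom (b) of Definition 3.1) twice, once in each direction, and then to pin down the two resulting linear transformations as mutual inverses by means of the uniqueness asserted in Proposition 3.2. The whole argument is an instance of the standard ``objects defined by a universal property are canonically isomorphic'' principle, so no genuinely new construction is needed, only a careful bookkeeping of which bilinear map is being factored through which tensor product.

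First I would regard $\theta$ itself as an arbitrary bilinear map into a linear space, namely $\theta\in b[\X\x\Y,\T]$. Applying axiom (b) to the tensor product ${(\T',\theta')}$ (so that $\T$ plays the role of the codomain $\Z$) yields a linear transformation ${\Theta\in\Le[\T',\T]}$ with ${\theta=\Theta\circ\theta'}$, and by Proposition 3.2 this $\Theta$ is the unique such map. Symmetrically, viewing ${\theta'\in b[\X\x\Y,\T']}$ and applying axiom (b) to ${(\T,\theta)}$ produces a unique ${\Theta'\in\Le[\T,\T']}$ with ${\theta'=\Theta'\circ\theta}$.

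The heart of the argument is to show that $\Theta$ and $\Theta'$ are mutually inverse. Composing the two factorizations gives ${\theta=\Theta\circ\theta'=\Theta\circ\Theta'\circ\theta=(\Theta\circ\Theta')\circ\theta}$. Thus both ${\Theta\circ\Theta'\in\Le[\T]}$ and the identity ${I_\T\in\Le[\T]}$ factor the bilinear map $\theta$ through $\T$ itself, that is, ${\theta=(\Theta\circ\Theta')\circ\theta=I_\T\circ\theta}$. The uniqueness clause of Proposition 3.2, applied to ${\phi=\theta}$ and the tensor product ${(\T,\theta)}$, forces ${\Theta\circ\Theta'=I_\T}$; an identical computation on the other side gives ${\Theta'\circ\Theta=I_{\T'}}$. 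Hence $\Theta$ is invertible with ${\Theta^{-1}=\Theta'}$, so $\Theta$ is an isomorphism, and its uniqueness subject to ${\Theta\circ\theta'=\theta}$ is once more immediate from Proposition 3.2.

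The step I expect to be the main obstacle is the self-factoring move: recognizing that the identity $I_\T$ is itself a legitimate factoring of $\theta$ through ${(\T,\theta)}$, so that the uniqueness in Proposition 3.2 can be invoked to collapse ${\Theta\circ\Theta'}$ to $I_\T$ (and likewise on $\T'$). This is precisely where axiom (a) silently does the work, since the uniqueness of the factoring transformation rests on $R(\theta)$ spanning $\T$: two linear maps on $\T$ that agree on every single tensor ${x\otimes y}$ must agree on all of $\T$. Everything else is formal diagram-chasing.
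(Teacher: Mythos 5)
Your proof is correct and follows essentially the same route as the paper: obtain $\Theta$ and $\Theta'$ by factoring $\theta$ through $(\T',\theta')$ and $\theta'$ through $(\T,\theta)$, then collapse $\Theta\circ\Theta'$ and $\Theta'\circ\Theta$ to the identities. The only cosmetic difference is that you justify the collapse by citing the uniqueness in Proposition 3.2 (with $\phi=\theta$, $\Z=\T$), while the paper spells out the underlying argument directly --- the composite agrees with $I_\T$ on $R(\theta)$ and hence, by linearity and axiom (a), on $\span R(\theta)=\T$ --- which is exactly the mechanism you correctly identify as doing the work.
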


\begin{proof}
Let ${(\T,\theta)}$ and ${(\T'\!,\theta')}$ be tensor products of $\X$ and
$\Y.$ For any bilinear map ${\phi\!:\X\x\Y\to\Z}$ into a linear space $\Z$
there are linear transformations ${\Phi\!:\T\!\to\Z}$ and
${\Phi'\!:\T'\!\to\Z}$ such that
${\phi=\Phi\circ\theta}={\Phi'\!\circ\theta'}\!$ (Definition 3.1), which
means the diagrams
\vskip0pt\noi
$$
\newmatrix{
\X\x\Y & \kern2pt\buildrel\phi\over\emap & \kern-1pt\Z                   \cr
       &                                 &                               \cr
       & \kern-3pt_\theta\kern-3pt\semap & \kern4pt\nmap\scriptstyle\Phi \cr
       &                                 & \phantom{;}                   \cr
       &                                 & \kern-2pt\T                   \cr}
\qquad\hbox{and}\qquad
\newmatrix{
\X\x\Y & \kern2pt\buildrel\phi\over\emap & \kern-1pt\Z                   \cr
 &                                    &                                  \cr
 & \kern-3pt_{\theta'}\kern-3pt\semap & \kern4pt\nmap\scriptstyle{\Phi'} \cr
 &                                    & \phantom{;}                      \cr
 &                                    & \kern-2pt\T'                     \cr}
$$
commute$.$ Since ${\theta'\!:\X\x\Y\to\T'}\!$ and ${\theta\!:\X\x\Y\to\T}$
are bilinear maps, then there are linear transformations
${\Theta'\!:\T\!\to\T'}\!$ and ${\Theta\!:\T'\!\to\T}\!$ such that
${\theta'\!=\Theta'\!\circ\theta}$ and ${\theta=\Theta\!\circ\theta'}\!$
(Definition 3.1 again), which means the diagrams
$$
\newmatrix{
\X\x\Y & \kern2pt\buildrel{\theta'}\over\emap & \kern-1pt\T'             \cr
  &                                 &                                    \cr
  & \kern-3pt_\theta\kern-3pt\semap & \kern4pt\nmap\scriptstyle{\Theta'} \cr
  &                                 & \phantom{;}                        \cr
  &                                 & \kern-2pt\T                        \cr}
\qquad\hbox{and}\qquad
\newmatrix{
\X\x\Y & \kern2pt\buildrel\theta\over\emap & \kern-1pt\T                 \cr
  &                                    &                                 \cr
  & \kern-3pt_{\theta'}\kern-3pt\semap & \kern4pt\nmap\scriptstyle\Theta \cr
  &                                    & \phantom{;}                     \cr
  &                                    & \kern-2pt\T'                    \cr}
$$
\vskip-2pt\noi
commute$.$ Therefore
$$
\theta'\!=(\Theta'\!\circ\Theta)\circ\theta'
\quad\;\hbox{and}\;\quad
\theta=(\Theta\circ\Theta')\circ\theta.
$$
Let $I_S$ denote the identity function on an arbitrary set $S.$ By the above
equations,
$$
\Theta'\!\circ\Theta|_{R(\theta')}=I_{R(\theta')}\!:R(\theta')\to R(\theta')
\quad\;\hbox{and}\;\quad
\Theta\circ\Theta'|_{R(\theta)}=I_{R(\theta)}\!:R(\theta)\to R(\theta).
$$
\vskip2pt\noi
Since $\Theta$ and $\Theta'$ are linear transformations (and so are their
compositions), and since $\span R(\theta)=\T$ and $\span R(\theta')={\T'}$
by Definition 3.1, then we get
$$
\Theta'\!\circ\Theta=\Theta'\!\circ\Theta|_{\span\!R(\theta')}
=I_{\span\!R(\theta')}=I_{\T'},
$$
$$
\Theta\circ\Theta'=\Theta\circ\Theta'|_{\span\!R(\theta)}
=I_{\span\!R(\theta)}=I_\T.
$$
Hence $\Theta$ and $\Theta'\!$ are the inverse of each other, and are unique
by Proposition 3.2.
\end{proof}

%%%%%%%%%%%%%%%%%%%%%%%%%%%  COROLLARY 3.1
\begin{corollary}
A tensor product of linear spaces is unique up to an isomor\-phism in the
following sense$.$ If\/ ${(\T,\theta)}$ and\/ ${(\T'\!,\theta')}$ are tensor
products of the same pair of linear spaces, then\/
${(\T,\theta)}={(\Theta\T'\!,\Theta\theta')}$ for an isomorphism\/ $\Theta$
in ${\Le[\T',\T]}.$ In particular, two tensor product spaces of the same pair
of linear spaces coincide if and only if the natural bilinear maps coincide\/.
\end{corollary}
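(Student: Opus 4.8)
The plan is to extract both assertions from Theorem 3.1, which furnishes a unique isomorphism ${\Theta\in\Le[\T',\T]}$ satisfying ${\Theta\circ\theta'=\theta}$. The first assertion is little more than a rephrasing of that theorem, whereas the biconditional will demand more care, and I expect its converse half to be the only genuinely delicate point.

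For the first assertion I would reason as follows. Being an isomorphism, $\Theta$ is surjective, so ${\Theta\T'=\Theta(\T')=\T}$. Together with the theorem's identity ${\Theta\theta'=\Theta\circ\theta'=\theta}$, this yields ${(\T,\theta)=(\Theta\T',\Theta\theta')}$ at once, exhibiting the isomorphism $\Theta$ in ${\Le[\T',\T]}$ as claimed; no obstacle arises here.

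For the \emph{in particular} biconditional I would argue the two directions separately. The forward one---equal natural maps forcing equal tensor product spaces---is routine: if ${\theta=\theta'}$ then ${R(\theta)=R(\theta')}$, and since ${\span R(\theta)=\T}$ and ${\span R(\theta')=\T'}$ by Definition 3.1, we get ${\T=\span R(\theta)=\span R(\theta')=\T'}$. The converse is where the work lies, since it must appeal to the uniqueness of $\Theta$. Assuming ${\T=\T'}$, the identity $I_\T$ is itself an isomorphism of $\T'$ onto $\T$, and the key observation is that ${\Theta=I_\T}$ holds precisely when ${\theta=\theta'}$: indeed ${\Theta=I_\T}$ gives ${\theta=\Theta\circ\theta'=I_\T\circ\theta'=\theta'}$, while conversely ${\theta=\theta'}$ makes $I_\T$ satisfy ${I_\T\circ\theta'=\theta'=\theta}$, whence ${\Theta=I_\T}$ by the uniqueness clause of Theorem 3.1.

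The step I expect to be the main obstacle is this converse, and in particular the precise meaning of \emph{coincide}. Read naively as ${\T=\T'\Rightarrow\theta=\theta'}$ it would be too strong: over a field of characteristic ${\neq2}$ one may rescale a fixed natural map, say ${\theta'(x,y)=2\,\theta(x,y)}$, and still verify both axioms, so the underlying space alone does not determine the natural map. I would therefore read \emph{the tensor product spaces coincide} as the stronger statement that the canonical isomorphism $\Theta$ of Theorem 3.1 is the identity---equivalently, that the pairs ${(\T,\theta)}$ and ${(\T'\!,\theta')}$ themselves coincide---under which reading the equivalence ${\Theta=I_\T\iff\theta=\theta'}$ is exactly what the uniqueness argument above supplies.
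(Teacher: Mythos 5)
Your proposal is correct and takes the same route as the paper, whose entire proof of this corollary is the single sentence that it is an immediate consequence of Theorem 3.1; you have simply supplied the details the paper omits. The first assertion and the ``if'' half of the biconditional are exactly as intended: surjectivity of $\Theta$ gives ${\Theta\T'=\T}$, the theorem gives ${\Theta\theta'=\theta}$, and ${\theta=\theta'}$ forces ${\T=\span R(\theta)=\span R(\theta')=\T'}$ by axiom (a). Your hesitation over the ``only if'' half is a genuine and worthwhile catch rather than a defect of your argument: equality of the underlying spaces alone does not determine the natural map, since for any nonzero scalar $\alpha$ the pair ${(\T,\alpha\theta)}$ is again a tensor product of the same spaces (axiom (a) survives nonzero scaling, and axiom (b) holds with $\alpha^{-1}\Phi$ in place of $\Phi$), so whenever the field has a scalar ${\alpha\ne0,1}$ the implication ${\T=\T'\Rightarrow\theta=\theta'}$ fails literally. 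The paper does not address this, so its final sentence is too strong under the naive reading; your repair --- reading ``coincide'' as applying to the pairs ${(\T,\theta)}$ and ${(\T',\theta')}$, equivalently as the canonical isomorphism of Theorem 3.1 being $I_\T$ --- restores a true statement, and your uniqueness argument for ${\Theta=I_\T\iff\theta=\theta'}$ is sound.
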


\begin{proof}
This is an immediate consequence of Theorem 3.1.
\end{proof}

A tensor product for a given pair of linear spaces is unique up to an
isomorphism by Corollary 3.1$.$ Then for a given pair ${(\X,\Y)}$ of
linear space it is common to write
$$
\T=\X\otimes\Y
$$
for {\it the}\/ tensor product space, and ${(\X\otimes\Y,\theta)}$ for
{\it the}\/ tensor product, of $\X$ and $\Y$.

%%%%%%%%%%%%%%%%%%%%%%%%%%%  PROPOSITION 3.3
\begin{proposition}
{\it Take an arbitrary triple\/ ${(\X,\Y,\Z)}$ of linear spaces$.$ The linear
spaces\/ $b[{\X\x\Y,\Z}]$ and\/ $\Le[{\T,\Z]}$ are isomorphic}\/:
$$
b[{\X\x\Y,\Z}]\cong\Le[{\X\otimes\Y,\Z}].
$$
\end{proposition}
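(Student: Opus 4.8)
The plan is to exhibit an explicit isomorphism by composing with the natural bilinear map. Fixing the tensor product ${(\X\otimes\Y,\theta)}$ of $\X$ and $\Y$, I would define
$$
\Psi\!:\Le[{\X\otimes\Y,\Z}]\to b[{\X\x\Y,\Z}]
\qquad\hbox{by}\qquad
\Psi(L)=L\circ\theta,
$$
and then verify that $\Psi$ is a linear bijection. The first task is to check that $\Psi$ is well defined: since ${\theta\in b[{\X\x\Y,\X\otimes\Y}]}$ and ${L\in\Le[{\X\otimes\Y,\Z}]}$, the composite $L\circ\theta$ is a composition of a bilinear map with a linear transformation and so is again bilinear (as recorded in Section 2.3), whence ${\Psi(L)\in b[{\X\x\Y,\Z}]}.$ Linearity of $\Psi$ then follows at once from the way composition distributes over the pointwise vector-space operations of $\Le[{\X\otimes\Y,\Z}]$.

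Next I would settle surjectivity and injectivity. Surjectivity is nothing more than a restatement of axiom (b) of Definition 3.1: for an arbitrary ${\phi\in b[{\X\x\Y,\Z}]}$ there is a linear transformation ${\Phi\in\Le[{\X\otimes\Y,\Z}]}$ with ${\phi=\Phi\circ\theta=\Psi(\Phi)}$, so $\phi$ lies in the range of $\Psi.$ For injectivity, suppose ${\Psi(L)=L\circ\theta=0}.$ Then $L$ vanishes on the range ${R(\theta)=\theta(\X\x\Y)}$, hence on its linear span; but axiom (a) gives ${\span R(\theta)=\X\otimes\Y}$, so $L$ vanishes on all of $\X\otimes\Y$, i.e. ${\N(\Psi)=\0}.$ Thus $\Psi$ is an invertible linear transformation, which is exactly an isomorphism, and the symmetry of $\cong$ yields the stated ${b[{\X\x\Y,\Z}]\cong\Le[{\X\otimes\Y,\Z}]}.$

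The only step carrying genuine content is injectivity, since that is precisely where axiom (a) --- the spanning of $\T$ by the range of $\theta$ --- has to enter; dropping that axiom would allow a nonzero $L$ to compose with $\theta$ to the zero bilinear map. Everything else is bookkeeping: well-definedness rides on the ``bilinear composed with linear is bilinear'' observation of Section 2.3, and surjectivity is a verbatim reading of axiom (b). As an alternative to checking injectivity directly, I could instead read off the two-sided inverse of $\Psi$ as the assignment ${\phi\mapsto\Phi}$ supplied by axiom (b) together with the uniqueness of $\Phi$ from Proposition 3.2, which makes the bijection manifest without a separate kernel computation.
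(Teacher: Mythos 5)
Your proposal is correct and follows essentially the same route as the paper: both define the composition map ${L\mapsto L\circ\theta}$ from $\Le[{\X\otimes\Y,\Z}]$ to $b[{\X\x\Y,\Z}]$ and verify it is a linear bijection. The only cosmetic difference is that the paper obtains bijectivity in one stroke by citing the existence-and-uniqueness statement of Proposition 3.2, whereas you derive injectivity directly from axiom (a) via a kernel computation (in effect reproving that uniqueness) and then note the Proposition 3.2 shortcut yourself.
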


\begin{proof}
Take any ${\Phi\in\Le[\T,\Z]}.$ The composition
${\Phi\circ\theta\!:\X\x\Y\to\Z}$ lies in $b[{\X\x\Y,\Z}]$ since $\theta$ is
bilinear and $\Phi$ is linear$.$ Let ${C_\theta\!:\Le[\T,\Z]\to b[\X\x\Y,\Z]}$
be defined by
$$
C_\theta(\Phi)=\Phi\circ\theta\,\in\,b[\X\x\Y,\Z]
\quad\;\hbox{for every}\,\quad
\Phi\in\Le[\T,\Z].
$$
For every $\phi$ in $b[{\X\x\Y,\Z}]$ there is one and only one $\Phi$ in
$\Le[{\T,\Z}]$ for which ${\phi=\Phi\circ\theta}$ according to
Proposition 3.2$.$ Then $C_\theta$ is injective and surjective, and so
invertible$.$ Since it is readily verified that $C_\theta$ is linear, then
$C_\theta$ is an isomorphism.
\end{proof}

Thus a crucial property of a tensor product ${(\T,\theta)}$ is to linearize
bilinear maps (via factorization by $\theta$ through $\T$) in the sense of
in Proposition 3.3$.$ In particular,
$$
(\X\otimes\Y)^\sharp\cong b[\X\x\Y,\FF].
$$

%%%%%%%%%%%%%%%%%%%%%%%%%%%  THEOREM 3.2
\begin{theorem}
Let\/ ${\T=\X\otimes\Y}$ be a tensor product space of\/ $\X$ and\/ $\Y,$
let\/ $E$ and\/ $D$ be nonempty subsets of\/ $\X$ and $\Y$ respectively,
and set
$$
\top_{\!E,D}
=\big\{x\otimes y\in\X\otimes\Y\!:\,x\in E\;\hbox{and}\;\,y\in D\big\}.
$$
\begin{description}
\item {$\kern-9pt$\rm(a)}
If\/ $\span E=\X$ and\/ $\span D=\Y$, then\/
${\span\!\top_{\!E,D}=\X\otimes\Y}$.
\vskip4pt
\item{$\kern-9pt$\rm(b)}
If\/ $E$ and\/ $D$ are linearly independent, then\/ $\!\top_{\!E,D}$ is
linearly independent.
\end{description}
\vskip4pt\noi
Therefore, if\/ $E$ is a Hamel basis for\/ $\X$ and\/ $D$ is a Hamel basis
for\/ $\Y$, then\/ $\!\top_{\!E,D}\!$ is a Hamel basis for\/
${\T=\X\otimes \Y}$.
\end{theorem}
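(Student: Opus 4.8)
The plan is to prove (a) and (b) separately and then assemble them, using the fact that a Hamel basis is precisely a linearly independent spanning set.

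For part (a) I would first recall that, by Axiom (a) of Definition 3.1, the single tensors span $\T$, and by Proposition 3.1 every element of $\T$ is a finite sum of single tensors; hence it suffices to show that each single tensor $x\otimes y$ already lies in $\span\top_{E,D}$. Writing $x=\sum_i\alpha_i e_i$ and $y=\sum_j\beta_j d_j$ as finite linear combinations of elements $e_i\in E$ and $d_j\in D$ (possible since $\span E=\X$ and $\span D=\Y$), bilinearity of $\theta$ gives $x\otimes y=\sum_{i,j}\alpha_i\beta_j\,(e_i\otimes d_j)$, a finite combination of elements of $\top_{E,D}$. Thus $R(\theta)\sse\span\top_{E,D}$, and since $\span R(\theta)=\T$ we conclude $\span\top_{E,D}=\T$.

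For part (b) the key device is to manufacture, from each pair of linear functionals, a coordinate functional on $\T$. Given $f\in\X^\sharp$ and $g\in\Y^\sharp$, the map $(x,y)\mapsto f(x)g(y)$ is a bilinear form on ${\X\x\Y}$, so by Proposition 3.3 with $\Z=\FF$ (equivalently, Axiom (b) together with Proposition 3.2) there is a unique $\Phi_{f,g}\in\Le[\T,\FF]$ with $\Phi_{f,g}(x\otimes y)=f(x)g(y)$ for all $(x,y)$. Now suppose $\sum_{i,j}c_{ij}\,(e_i\otimes d_j)=0$ is a finite dependence relation, where the $e_i\in E$ are distinct and the $d_j\in D$ are distinct. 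Since the finite set $\{e_i\}$ is linearly independent it extends to a Hamel basis of $\X$, so for each index $k$ there is $f_k\in\X^\sharp$ with $f_k(e_i)=\delta_{ki}$; likewise there is $g_l\in\Y^\sharp$ with $g_l(d_j)=\delta_{lj}$.

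Applying $\Phi_{f_k,g_l}$ to the relation then yields $0=\sum_{i,j}c_{ij}f_k(e_i)g_l(d_j)=\sum_{i,j}c_{ij}\delta_{ki}\delta_{lj}=c_{kl}$, so every coefficient vanishes and $\top_{E,D}$ is linearly independent. Combining the two parts gives the final assertion: if $E$ is a Hamel basis for $\X$ and $D$ a Hamel basis for $\Y$, then $E$ and $D$ both span and are linearly independent, so (a) yields $\span\top_{E,D}=\T$ and (b) yields linear independence, whence $\top_{E,D}$ is a Hamel basis for ${\T=\X\otimes\Y}$. I expect the only real content to lie in part (b): the step that turns the abstract universal property into usable coordinate functionals $\Phi_{f_k,g_l}$, together with the routine-but-essential fact that a linearly independent set admits a dual system $f_k(e_i)=\delta_{ki}$ obtained by extending it to a basis. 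Part (a) and the final assembly are immediate by comparison.
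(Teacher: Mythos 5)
Your proof is correct. Part (a) and the final assembly are essentially identical to the paper's argument (expand $x_i$ and $y_i$ over $E$ and $D$ and use bilinearity of $\theta$). For part (b), however, you take a genuinely different route. The paper fixes finite linearly independent subsets $E'=\{e_j\}_{j=1}^m\sse E$ and $D'=\{d_k\}_{k=1}^\ell\sse D$, forms $\M=\span E'$ and $\N=\span D'$, and builds a single matrix-valued bilinear map $\phi\!:\M\x\N\to\Le[\FF^m,\FF^\ell]$ sending $(u,v)$ to the matrix of products of coordinates; the induced linear map carries $e_j\otimes d_k$ to the elementary matrix $\Pi_{j,k}$, and linear independence of the $\Pi_{j,k}$ kills any dependence relation. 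You instead construct, for each pair of indices, a scalar-valued bilinear form $(x,y)\mapsto f_k(x)g_l(y)$ on all of ${\X\x\Y}$ using a dual system $f_k(e_i)=\delta_{ki}$, $g_l(d_j)=\delta_{lj}$ obtained by extending the finite independent sets to Hamel bases, and then read off each coefficient $c_{kl}$ by applying the induced functional $\Phi_{f_k,g_l}\in\Le[\T,\FF]$. The two arguments encode the same information (your family $\{\Phi_{f_k,g_l}\}$ is the entrywise decomposition of the paper's single matrix-valued $\Phi$), but yours has a small structural advantage: since your bilinear forms are defined on the whole Cartesian product ${\X\x\Y}$, axiom (b) of Definition 3.1 applies directly to $\T$, whereas the paper's $\phi$ lives only on ${\M\x\N}$ and so tacitly relies on the restriction/extension machinery (Proposition 2.2, and effectively Proposition 3.4, which is only established after this theorem). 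The price you pay is the appeal to extending a linearly independent set to a Hamel basis to manufacture the dual functionals, which is the same Zorn's Lemma content in a different guise. Both proofs are complete; yours is the more common textbook version and slightly cleaner in its dependency structure.
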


\begin{proof}
(a)
Take an arbitrary element $\digamma={\sum}_{i=1}^nx_i\otimes y_i$ from
${\X\otimes\Y}.$ If ${\span E=\X}$ and ${\span D=\Y}$ then consider any
expansion of each ${x_i\in\X}$ in terms of vectors $e_{i,j}$ from $E$ and any
expansion of each ${y_i\in\Y}$ in terms of vectors $d_{i,k}$ from $D.$ Since
${x\otimes y=\theta(x,y)}$ for each pair ${(x,y)\in\X\x\Y}$, and since
${\theta\!:\X\x\Y\to\X\otimes\Y}$ is bilinear,
\goodbreak\vskip4pt\noi
$$
\digamma={\sum}_{i=1}^n
\Big({\sum}_{j=1}^m\beta_j e_{i,j}\otimes{\sum}_{k=1}^\ell\gamma_k d_{i,k}\Big)
={\sum}_{i,j,k}^{n,m,\ell}\beta_j\gamma_k(e_{i,j}\otimes d_{i,k}).
$$
\vskip2pt\noi
Thus $\digamma$ lies in $\span\!\top_{\!E,D}.$ Then
${\X\otimes\Y\sse\span\!\top_{\!E,D}}.$ So ${\span\!\top_{\!E,D}=\X\otimes\Y}$.

\vskip6pt\noi
(b)
Let $E'\!=\{e_j\}_{j=1}^m$ and $D'\!=\{d_k\}_{k=1}^\ell$ be arbitrary nonempty
finite linearly inde\-pendent subsets of $E$ and $D$ respectively, and
consider the linear manifolds
$$
\M=\span E'\!=\span\{e_j\}_{j=1}^m\sse\X\kern-1pt
\quad\hbox{and}\quad
\N=\span D'\!=\span\{d_k\}_{k=1}^\ell\sse\Y.
$$
Set $\Z=\Le[\FF^m\!,\FF^\ell]$, identified with the linear space of all
${\ell\x m}$ matrices of entries in $\FF.$ Take
${\phi\!:\M\x\N\to\Z}$ given for ${u=\sum_{j=1}^m\beta_je_j\in\M}$
and ${v=\sum_{k=1}^\ell\gamma_kd_k\in\N}$ by
$$
\phi(u,v)=\big(\beta_j\gamma_k\big)\in\Z
\quad\;\hbox{for}\;\quad
j=1,\dots,m
\;\;\hbox{and}\;\;
k=1,\dots,\ell,
$$
where $\big(\beta_j\gamma_k\big)$ is the ${\ell\x m}$ matrix whose entries
are the products $\beta_j\gamma_k$ of the coefficients of the unique
expansion of arbitrary vectors ${u\in\M}$ and ${v\in\N}$ in terms of the
linearly independent sets $E'\!$ and $D'\!.$ It is readily verified that
${\phi\!:\M\x\N\!\to\Z}$ is a bilinear map$.$ Thus consider the linear
transformation ${\Phi\!:\M\otimes\N\!\to\Z}$ such that
$\phi={\Phi\circ\theta}$ according to axiom (b) in Definition 3.1$.$ So
$\phi({u,v})=\Phi\,(\theta({u,v}))=\Phi({u\otimes v})$ for every
$u=\sum_{j=1}^m\beta_je_j$ in $\M$ and every $v=\sum_{k=1}^\ell\gamma_kd_k$
in $\N$. In particular,
$$
\Phi(e_j\otimes d_k)=\phi(e_j,d_k)=\Pi_{j,k},
$$
where ${\Pi_{j,k}\in\Z}$ is the ${\ell\x m}$ matrix whose entry at position
${j,k}$ is 1 and all other entries are 0$.$ These matrices form a linearly
independent set in $\Z.$ (In fact, $\{\Pi_{j,k}\}_{j,k=1}^{m,\ell}\!$ is the
canonical Hamel basis for $\Z$.) Take any pair of integers ${k',j'}$ and
suppose ${e_{j'}\otimes d_{k'}}$ is a linear combination of the remaining
single tensors $\{{e_j\otimes d_k}\}_{j,k\in I'}$ with
$I'=\{{j,k=1\;\hbox{to}\;\,m,\ell}\!:\,{j\ne j',k\ne k'}\}$,
say
$$
e_{j'}\otimes d_{k'}
={\sum}_{j,k\in I'}\delta_{j,k}\kern1pt(e_j\otimes d_k).
$$
Then, as ${\Phi\!:\M\otimes\N\to\Z}$ is linear,
$$
\Pi_{j',k'}=\Phi(e_{j'}\otimes d_{k'})
={\sum}_{j,k\in I'}\delta_{j,k}\kern1pt\Phi(e_j\otimes d_k)
={\sum}_{j,k\in I'}\delta_{j,k}\kern1pt\Pi_{j,k},
$$
and so ${\delta_{j,k}=0}$ for every ${j,k\in I'}$ since
$\{\Pi_{j,k}\}_{j,k=1}^{m,\ell}\!$ is linearly independent in $\Z.$ Hence
$\{e_j\otimes d_k\}_{j,k=1}^{m,\ell}\!$ is linearly independent in
$\top_{\!E,D}.$ In other words,
$$
\top_{\!E'\!,D'}\!
=\{x\otimes y\in\X\otimes\Y\!:\,x\in E'\;\hbox{and}\;\,y\in D'\}
$$
is a finite linearly independent subset of $\top_{\!E,D}$ whenever $E'$ and
$D'$ are finite linearly independent subsets of $E$ and $D.$ Thus if
$E$ and $D$ are linearly independent subsets of $\X$ and $\Y$, then every
finite subset of each of them is trivially linearly independent, and so is
every finite subset of $\top_{\!E,D}$ as we saw above$.$ But if every finite
subset of $\top_{\!E,D}$ is linearly independent, then so is
$\top_{\!E,D}$ (see, e.g., \cite[Proposition 2.3]{EOT}).
\end{proof}

%%%%%%%%%%%%%%%%%%%%%%%%%%%  COROLLARY 3.2
\begin{corollary}
\qquad
$\dim(\X\otimes\Y)=\dim\X\cdot\dim\Y$.
\end{corollary}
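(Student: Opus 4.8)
The plan is to read the corollary straight off Theorem 3.2. I fix a Hamel basis $E$ for $\X$ and a Hamel basis $D$ for $\Y$, so that $\dim\X=\#E$ and $\dim\Y=\#D$ by the definition of dimension as the cardinality of a Hamel basis. The closing assertion of Theorem 3.2 then gives that
$$
\top_{\!E,D}=\{x\otimes y\in\X\otimes\Y\!: x\in E\;\hbox{and}\;y\in D\}
$$
is a Hamel basis for ${\T=\X\otimes\Y}$, whence ${\dim(\X\otimes\Y)=\#\top_{\!E,D}}.$ Everything therefore reduces to the cardinal identity ${\#\top_{\!E,D}=\#E\cdot\#D}$.

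To obtain it, I would introduce the evaluation map ${\mu\!:E\x D\to\top_{\!E,D}}$ given by ${\mu(e,d)=e\otimes y}$, that is ${\mu(e,d)=e\otimes d}$. Surjectivity of $\mu$ is immediate from the definition of $\top_{\!E,D}$, so the one genuine point is injectivity: distinct basis pairs must yield distinct single tensors$.$ Here I would reuse the construction from the proof of Theorem 3.2(b)$.$ Given two distinct pairs ${(e,d)\neq(e',d')}$ in ${E\x D}$, apply that argument to the finite linearly independent sets ${\{e,e'\}\sse\X}$ and ${\{d,d'\}\sse\Y}$ to produce a linear transformation $\Phi$ sending each admissible single tensor ${e_j\otimes d_k}$ to the distinct matrix unit $\Pi_{j,k}.$ Since $\Phi$ is a function and ${\Pi_{j,k}\neq\Pi_{j',k'}}$ whenever ${(j,k)\neq(j',k')}$, the equality ${e\otimes d=e'\otimes d'}$ would force the corresponding matrix units to coincide, a contradiction; hence $\mu$ is injective.

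The main (indeed only) obstacle is exactly this injectivity. A priori it is not obvious, because the representation of a single tensor is not unique, so in principle different basis pairs could collapse to the same element of ${\X\otimes\Y}$ and shrink $\#\top_{\!E,D}$ below ${\#(E\x D)}.$ The separating matrix-valued map of Theorem 3.2(b) is precisely the device that prevents this collapse, and it is what does the real work here.

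With $\mu$ a bijection I conclude ${\#\top_{\!E,D}=\#(E\x D)=\#E\cdot\#D}$ in the sense of cardinal multiplication, and therefore ${\dim(\X\otimes\Y)=\dim\X\cdot\dim\Y}.$ Reading the right-hand product as a cardinal product makes the statement cover the infinite-dimensional case as well as the finite one, with no change in the argument.
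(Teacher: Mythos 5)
Your proof is correct and follows essentially the same route as the paper: Theorem 3.2 gives that $\top_{\!E,D}$ is a Hamel basis, a bijection with $E\times D$ gives $\#\top_{\!E,D}=\#E\cdot\#D$, and the dimension identity follows as a cardinal product. The only difference is that you explicitly justify injectivity of $(e,d)\mapsto e\otimes d$ via the separating matrix-valued map from the proof of Theorem 3.2(b), a detail the paper merely asserts ``as $E$ and $D$ are linearly independent.''
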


\begin{proof}
$\!$If $E$ and $D$ are Hamel basis for $\X$ and $\Y$, then $\top_{\!E,D}$ is
a Hamel basis for $\T$ by Theorem 3.2$.$ Also
$\!\top_{\!E,D}
={\big\{x\otimes y\in\X\otimes\Y\!:\,x\in E\;\hbox{and}\;\,y\in D\big\}}$ is
in a one-to-one correspondence with ${E\x D}$ as $E$ and $D$ are linearly
independent$.$ Since $\#({E\x D})=$ ${\#E\cdot\#D}$ by definition of product
of cardinal numbers (see, e.g.,
 \cite[Pro\-blem 1.30]{EOT}), then
$\#\top_{\!E,D}\kern-1pt={\#E\cdot\#D}.$ Thus follows the claimed dimension
identity.
\end{proof}

A straightforward consequence of the dimension identity of Corollary 3.2 is
this$.$ Tensor product is commutative up to an isomorphism:
$$
\X\otimes\Y\cong\Y\otimes\X.
$$
\vskip-2pt

\vskip4pt
Next we identify a special type of linear manifold of a tensor product
space.

%%%%%%%%%%%%%%%%%%%%%%%%%%%  PROPOSITION 3.4
\begin{proposition}
Let\/ $\X\kern-1pt$ and\/ $\kern-1pt\Y$ be linear spaces$.$ Suppose\/ $\M$
and\/ $\N$ are \hbox{linear} manifolds of\/ $\X\kern-1pt$ and\/ $\kern-1pt\Y$
respectively$.$ Let\/ ${(\X\otimes\kern-.5pt\Y,\theta)}$ be a tensor
product$.$ Set\/ ${\M\otimes\N}\kern-1pt=$ $\span R(\theta|_{\M\x\N}).$ Then\/
${\M\otimes\N}$ is a linear manifold of\/ the tensor product space
${\X\otimes\Y}$ and\/ ${(\M\otimes\N,\vtheta)}$ is a tensor product with\/
${\vtheta=\theta|_{\M\x\N}}$.
\end{proposition}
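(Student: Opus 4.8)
The plan is to dispose of the manifold claim and then verify the two axioms of Definition 3.1 for $(\M\otimes\N,\vtheta)$. That $\M\otimes\N$ is a linear manifold of $\X\otimes\Y$ is immediate from its definition $\M\otimes\N=\span R(\theta|_{\M\x\N})$: the span of any subset of a linear space is a linear manifold. The map $\vtheta=\theta|_{\M\x\N}$ is bilinear, being the restriction of the bilinear map $\theta$ to the Cartesian product $\M\x\N$ of linear manifolds (as noted in Section 2), and its range $R(\vtheta)=R(\theta|_{\M\x\N})$ is contained in $\span R(\theta|_{\M\x\N})=\M\otimes\N$, so $\vtheta$ genuinely maps $\M\x\N$ into $\M\otimes\N$. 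Axiom (a) is then satisfied by construction, since $R(\vtheta)$ spans $\M\otimes\N$ by the very definition of $\M\otimes\N$.

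The substance lies in axiom (b), and here the idea is to reduce the factorization on $\M\x\N$ to the universal property already available for the full tensor product $(\X\otimes\Y,\theta)$, using bilinear extension as the bridge. First I would take an arbitrary bilinear map $\phi\!:\M\x\N\to\Z$ into a linear space $\Z$ and, by Proposition 2.2, extend it to a bilinear map $\what\phi\!:\X\x\Y\to\Z$ on the entire Cartesian product. Applying axiom (b) of Definition 3.1 to $(\X\otimes\Y,\theta)$ and $\what\phi$ then furnishes a linear transformation $\what\Phi\in\Le[\X\otimes\Y,\Z]$ with $\what\phi=\what\Phi\circ\theta$. I would set $\Phi=\what\Phi|_{\M\otimes\N}\in\Le[\M\otimes\N,\Z]$, which is linear because $\what\Phi$ is linear and $\M\otimes\N$ is a linear manifold. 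To verify $\phi=\Phi\circ\vtheta$, I would evaluate at an arbitrary $(u,v)\in\M\x\N$: since $\vtheta(u,v)=\theta(u,v)\in\M\otimes\N$, we obtain $\Phi(\vtheta(u,v))=\what\Phi(\theta(u,v))=\what\phi(u,v)=\phi(u,v)$, the final equality holding because $\what\phi$ extends $\phi$ on $\M\x\N$. This is exactly the factorization demanded by axiom (b), so $(\M\otimes\N,\vtheta)$ is a tensor product.

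I expect the main obstacle to be axiom (b), and more precisely the fact that $\phi$ is defined only on $\M\x\N$: the universal property of $(\X\otimes\Y,\theta)$ cannot be applied to a bilinear map that does not live on all of $\X\x\Y$, so one must first produce an extension. This is precisely what Proposition 2.2 supplies, and it is fortunate that the paper establishes that extension result independently of the notion of tensor product, since otherwise the argument would be circular. With both axioms in hand, the uniqueness of the factoring transformation $\Phi$ is automatic from Proposition 3.2, so nothing further need be checked.
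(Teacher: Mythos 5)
Your proposal is correct and follows essentially the same route as the paper: both arguments hinge on the bilinear extension result (Proposition 2.2) to pass between bilinear maps on $\M\x\N$ and on $\X\x\Y$, apply the universal property of $(\X\otimes\Y,\theta)$, and then restrict the resulting linear transformation to $\M\otimes\N=\span R(\theta|_{\M\x\N})$. The only difference is one of presentation order (you extend first and then restrict, while the paper restricts first and then invokes Proposition 2.2 to see that every bilinear map on $\M\x\N$ arises this way), which is logically the same argument.
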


\proof
Consider Definition 3.1$.$ Let ${(\X\otimes\Y,\theta)}$ be a tensor
product$.$ Take any biline\-ar map ${\phi\!:\X\x\Y\to\Z}.$ Let
${\Phi\!:\X\otimes\Y\to\Z}$ be the linear transformation such that
$$
\phi=\Phi\circ\theta.
$$
Let $\M$ and $\N$ be linear manifolds of $\X$ and $\Y.$ Take the restriction
$\theta|_{\M\x\N}$ of the natural bilinear map $\theta$ to the Cartesian
product ${\M\x\N\sse\X\x\Y}$ so that
$$
\theta|_{\M\x\N}\!:\M\x\N\to R(\theta|_{\M\x\N})
\sse\span R(\theta|_{\M\x\N})\sse\span R(\theta)=\X\otimes\Y,
$$
which is a bilinear map$.$ Now consider the restriction $\phi|_{\M\x\N}$ of
the arbitrary bilinear map ${\phi\!:\X\x\Y\to\Z}$ to ${\M\x\N}$, which is
again a bilinear map for which
$$
\phi|_{\M\x\N}
=(\Phi\circ\theta)|_{\M\x\N}
=\Phi\circ\theta|_{\M\x\N}
=\Phi|_{\span\!R(\theta|_{\M\x\N})}\circ\theta|_{\M\x\N},
$$
where ${\Phi|_{\span\!R(\theta|_{\M\x\N})}}$ is the restriction of the linear
transformation ${\Phi\!:\X\otimes\Y\to\Z}$ to the linear manifold
$\span R(\theta|_{\M\x\N})$, again a linear transformation$.$ Proposition 2.2
says that every bilinear map ${\psi\!:\M\x\N\to\Z}$ is of the form
${\phi|_{\M\x\N}\!:\M\x\N\to\Z}$ for some bilinear map ${\phi\!:\X\x\Y\to\Z}.$
Thus for every bilinear map ${\psi\!:\M\x\N\to\Z}$ there is a linear
transformation
${\Phi|_{\span\!R(\theta|_{\M\x\N})}\!:\span R(\theta|_{\M\x\N})\to\Z}$ such
that
$$
\phi=\Phi|_{\span\!R(\theta|_{\M\x\N})}\circ\theta|_{\M\x\N}.
$$
Set $\M\otimes\N=\span R(\theta|_{\M\x\N})\sse\X\otimes\Y$, a linear manifold
of the linear space ${\X\otimes\Y}$, and $\vtheta=\theta|_{\M\x\N}$, the
bilinear restriction of the bilinear $\theta.$ Thus by Definition 3.1
$$
\hbox{${(\M\otimes\N,\vtheta)}$ is a tensor product}. \eqno{\qed}
$$
\vskip-2pt

\vskip6pt
Therefore ${\M\otimes\N}$ stands for {\it the}\/ tensor product space of
linear manifolds $\M$ and $\N$ of the linear spaces $\X$ and $\Y$ according to
Corollary 3.1 and Proposition 3.4.

%%%%%%%%%%%%%%%%%%%%%%%%%%%  DEFINITION 3.2
\begin{definition}
A linear manifold $\Upsilon\!$ of a tensor product space ${\T=\X\otimes\Y}$ is
{\it regular}\/ if ${\Upsilon\!=\M\otimes\N}$ for some linear manifolds $\M$
and $\N$ of the linear spaces $\X$ and $\Y.$ Otherwise $\Upsilon$ is called
{\it irregular}\/.
\end{definition}

The next characterization of regular linear manifolds is straightforward from
Theorem 3.2$.$ For a collection of properties of regular linear manifolds see
\cite{Kub2, Kub3}.

%%%%%%%%%%%%%%%%%%%%%%%%%%%  PROPOSITION 3.5
\begin{proposition}
{\it A nonzero linear manifold\/ $\Upsilon\kern-1pt$ of\/ ${\X\otimes\Y}$ is
regular if and only if
$$
\Upsilon=\span\!\top_{\!E'\!,D'}
$$
for some nonempty subsets\/ ${E'\sse E}$ and\/ ${D'\sse D}$ for Hamel bases\/
$E$ and\/ $D$ for\/ $\X$ and\/ $\Y$ respectively}\/.
\end{proposition}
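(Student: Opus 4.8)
The plan is to prove the two implications of the biconditional separately, in each case translating between the ``regular'' description $\Upsilon=\M\otimes\N$ of Definition 3.2 and the ``basis'' description $\Upsilon=\span\top_{\!E',D'}$, using Proposition 3.4 together with part (a) of Theorem 3.2 as the bridge.

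Necessity ($\Rightarrow$). First I would assume $\Upsilon$ is regular, so $\Upsilon=\M\otimes\N$ for some linear manifolds $\M\sse\X$ and $\N\sse\Y$. Since $\dim(\M\otimes\N)=\dim\M\cdot\dim\N$ by Corollary 3.2 and $\Upsilon$ is nonzero, both $\M$ and $\N$ must be nonzero, and hence admit nonempty Hamel bases $E'$ and $D'$ of $\M$ and $\N$ respectively. By the Hamel basis extension theorem I would enlarge $E'$ to a Hamel basis $E$ for $\X$ and $D'$ to a Hamel basis $D$ for $\Y$, producing the subsets $E'\sse E$ and $D'\sse D$ demanded by the statement. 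Now, by Proposition 3.4 the pair $(\M\otimes\N,\vtheta)$ with $\vtheta=\theta|_{\M\x\N}$ is itself a tensor product of $\M$ and $\N$; since $\span E'=\M$ and $\span D'=\N$, Theorem 3.2(a) applied to this tensor product yields $\span\top_{\!E',D'}=\M\otimes\N=\Upsilon$.

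Sufficiency ($\Leftarrow$). Conversely I would start from $\Upsilon=\span\top_{\!E',D'}$ with nonempty $E'\sse E$ and $D'\sse D$ for Hamel bases $E$, $D$ of $\X$, $\Y$. Put $\M=\span E'\sse\X$ and $\N=\span D'\sse\Y$, which are linear manifolds, and since $E'$ and $D'$ are linearly independent as subsets of Hamel bases they are in fact Hamel bases for $\M$ and $\N$. Invoking Proposition 3.4 and Theorem 3.2(a) exactly as above gives $\span\top_{\!E',D'}=\M\otimes\N$, so $\Upsilon=\M\otimes\N$ is regular by Definition 3.2; the nonemptiness of $E'$, $D'$ forces $\M,\N\ne\0$, hence $\Upsilon\ne\0$, consistent with the hypothesis.

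The step that most needs care is reconciling the two meanings of ``single tensor'' that appear. When I apply Theorem 3.2(a) to the tensor product $(\M\otimes\N,\vtheta)$, the objects $x\otimes y$ in $\top_{\!E',D'}$ are the values $\vtheta(x,y)$, whereas in the statement they are the values $\theta(x,y)$ in $\X\otimes\Y$. These coincide precisely because $\vtheta=\theta|_{\M\x\N}$ is the restriction of $\theta$ (Proposition 3.4), so each single tensor $x\otimes y$ with $x\in E'$, $y\in D'$ is the same element of $\X\otimes\Y$ irrespective of which tensor product it is viewed in; this identification is what legitimizes passing Theorem 3.2(a) between the two settings. The remaining ingredient, namely the existence and extendability of Hamel bases, is standard and I would cite it without further comment.
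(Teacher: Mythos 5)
Your proof is correct and follows exactly the route the paper intends: the paper offers no written proof, remarking only that the result is ``straightforward from Theorem 3.2,'' and your argument---combining Proposition 3.4 (so that $(\M\otimes\N,\theta|_{\M\x\N})$ is itself a tensor product), Theorem 3.2(a), and the extension of Hamel bases of $\M$ and $\N$ to Hamel bases of $\X$ and $\Y$---is precisely that intended argument, with the helpful extra care about identifying the single tensors $\vtheta(x,y)$ and $\theta(x,y)$.
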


The concept of tensor product of linear transformations is given as follows.

%%%%%%%%%%%%%%%%%%%%%%%%%%%  DEFINITION 3.3
\begin{definition}
Let ${\X,\Y,\V,\W}$ be linear spaces and consider the tensor product spaces
${\X\otimes\Y}$ and ${\V\otimes\W}.$ Let ${A\in\Le[\X,\V]}$ and
${B\in\Le[\Y,\W]}$ be linear trans\-for\-ma\-tions$.$ For each
${\sum_{i=1}^nx_i\otimes y_i}$ in ${\X\otimes\Y}$ set
$$
(A\otimes B)\hbox{$\sum$}_{i=1}^nx_i\otimes y_i
=\hbox{$\sum$}_{i=1}^nAx_i\otimes By_i
\:\;\hbox{in}\;\;
\V\otimes\W.
$$
This defines a map ${A\otimes B}$ of the linear space ${\X\otimes\Y}$ into the
linear space ${\V\otimes\W}$, which is referred to as the {\it tensor product
of the transformations\/ $A$ and}\/ $B$, or the {\it tensor product
transformation}\/ ${A\otimes B}$.
\end{definition}

%%%%%%%%%%%%%%%%%%%%%%%%%%%  PROPOSITION 3.6
\begin{proposition}
Take\/ ${A\in\Le[\X,\V]}$ and\/ ${B\in\Le[\Y,\W]}$.
\vskip6pt\noi
{\rm(a)}
In fact,\/ ${A\otimes B}$ in Definition 3.3 defines a linear transformation\/,
$$
A\otimes B\in\Le[\X\otimes\Y,\V\otimes\W],
$$
and\/ $(A\otimes B)\digamma$ does not depend on the representation\/
${{\sum}_{i=1}^nx_i\!\otimes\!y_i}$ of\/ ${\digamma\!\in\X\otimes\Y}$.
\vskip6pt\noi
{\rm(b)}
The map\/ $\theta\!:\Le[\X,\V]\x\Le[\Y,\W]\to\Le[\X\otimes\Y,\V\otimes\W]$
defined by
$$
\theta(A,B)=A\otimes B
\quad\;\hbox{for every}\;\quad
(A,B)\in\Le[\X,\V]\x\Le[\Y,\W],
$$
with\/ $A\otimes B\in\Le[\X\otimes\Y,\V\otimes\W]$ as in\/ $(\hbox{\rm a})$,
is bilinear\/.
\vskip6pt\noi
{\rm(c)}
Set
\vskip-4pt\noi
$$
\Le[\X,\V]\otimes\Le[\Y,\W]
\,=\,\span R(\theta)\,\sse\,\Le[\X\otimes\Y,\V\otimes\W].
$$
\vskip4pt\noi
Then\/ ${(\Le[\X,\V]\otimes\Le[\Y,\W],\theta)}$ is a tensor product of\/
${\Le[\X,\V]}$ and\/ ${\Le[\Y,\W]}$.
\vskip6pt\noi
{\rm(d)}
The transformation ${A\otimes B\in\Le[\X\otimes\Y,\V\otimes\W]}$ in\/
$(\hbox{\rm a})$ coincides with a single tensor in the tensor product space\/
${\Le[\X,\V]\otimes\Le[\Y,\W]}\!:$
$$
A\otimes B\,\in\,\Le[\X,\V]\otimes\Le[\Y,\W]
\,\sse\,\Le[\X\otimes\Y,\V\otimes\W].
$$
\end{proposition}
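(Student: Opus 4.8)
My plan is to route (a) through the universal property, reduce (b) to single tensors, verify the two axioms of Definition 3.1 for (c) (where the real work is a linear-independence statement), and then read off (d). For (a), rather than checking directly that $\sum_i Ax_i\otimes By_i$ is independent of the chosen representation of $\digamma$, I would construct the map abstractly. Writing $\tau\!:\V\x\W\to\V\otimes\W$ for the natural bilinear map, set $\psi(x,y)=Ax\otimes By=\tau(Ax,By)$. Linearity of $A$ and $B$ together with bilinearity of $\tau$ make $\psi\!:\X\x\Y\to\V\otimes\W$ bilinear, so axiom (b) of Definition 3.1 applied to $\X\otimes\Y$ furnishes a linear transformation $\Psi\in\Le[\X\otimes\Y,\V\otimes\W]$, unique by Proposition 3.2, with $\Psi(x\otimes y)=Ax\otimes By$. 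As a genuine linear transformation $\Psi$ sends every $\digamma=\sum_i x_i\otimes y_i$ to $\sum_i Ax_i\otimes By_i$ irrespective of the representation; identifying $A\otimes B$ with $\Psi$ delivers both the well-definedness and the linearity asserted in (a).

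For (b), two transformations in $\Le[\X\otimes\Y,\V\otimes\W]$ coincide as soon as they agree on all single tensors $x\otimes y$, since these span $\X\otimes\Y$ by axiom (a). Hence I would verify the four bilinearity identities (additivity and homogeneity in $A$ and in $B$) only on single tensors, where each collapses to bilinearity of $\tau$; for instance $((A_1+A_2)\otimes B)(x\otimes y)=(A_1x+A_2x)\otimes By=A_1x\otimes By+A_2x\otimes By$.

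Part (c) carries the weight. Axiom (a) is automatic, as $\Le[\X,\V]\otimes\Le[\Y,\W]$ is defined to be $\span R(\theta)$. For axiom (b) I would fix Hamel bases $\{A_\gamma\}$ of $\Le[\X,\V]$ and $\{B_\delta\}$ of $\Le[\Y,\W]$ and prove that $\{A_\gamma\otimes B_\delta\}$ is linearly independent in $\Le[\X\otimes\Y,\V\otimes\W]$. Granting this, bilinearity from (b) shows that this family spans $\span R(\theta)$ as well, so it is a basis; given a bilinear $\Xi\!:\Le[\X,\V]\x\Le[\Y,\W]\to\Z$ I may then set $\Lambda(A_\gamma\otimes B_\delta)=\Xi(A_\gamma,B_\delta)$ and extend linearly, whereupon bilinearity of both $\theta$ and $\Xi$ forces $\Lambda(A\otimes B)=\Xi(A,B)$ for all $A,B$, i.e. $\Xi=\Lambda\circ\theta$. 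The linear-independence step is the main obstacle, and I would settle it by duality: if $\sum_{\gamma,\delta}c_{\gamma\delta}\,A_\gamma\otimes B_\delta=0$ as an operator, then $\sum_{\gamma,\delta}c_{\gamma\delta}\,(A_\gamma x)\otimes(B_\delta y)=0$ in $\V\otimes\W$ for all $x,y$. For $f\in\V^\sharp$ and $g\in\W^\sharp$ the form $(v,w)\mapsto f(v)g(w)$ is bilinear, so by Proposition 3.3 it induces a functional on $\V\otimes\W$ carrying $v\otimes w$ to $f(v)g(w)$; applying it gives $\sum_{\gamma,\delta}c_{\gamma\delta}\,f(A_\gamma x)\,g(B_\delta y)=0$. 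Fixing $y$ and $g$ and using that $\V^\sharp$ separates the points of $\V$, together with the linear independence of $\{A_\gamma\}$, forces $\sum_\delta c_{\gamma\delta}\,g(B_\delta y)=0$ for every $\gamma$; the symmetric argument on the $\W,\Y$ side then forces every $c_{\gamma\delta}=0$.

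Finally, (d) is immediate from (b) and (c): $\theta(A,B)=A\otimes B$ is, by definition, the single tensor of $A$ and $B$ in the tensor product space $\Le[\X,\V]\otimes\Le[\Y,\W]=\span R(\theta)$, and this element is literally the transformation $A\otimes B$ produced in (a). Thus the two readings of the symbol $A\otimes B$ coincide inside $\Le[\X\otimes\Y,\V\otimes\W]$.
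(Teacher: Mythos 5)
Your proposal is correct, but it takes a genuinely different route from the paper's, and in part (c) it is actually more complete. The paper dismisses (a), (b) and (d) as ``readily verified'' and proves (c) by defining the factoring map directly by the formula $\Phi\big(\sum_{i}A_i\otimes B_i\big)=\sum_{i}\phi(A_i,B_i)$ on $\span R(\theta)$, asserting that linearity is easy; this silently assumes that the value does not depend on the chosen representation of an element of $\span R(\theta)$, i.e.\ that no linear relation $\sum_i A_i\otimes B_i=0$ holds among the operators $A\otimes B$ in $\Le[\X\otimes\Y,\V\otimes\W]$ beyond those forced by bilinearity --- and the paper offers no argument for this (Theorem 3.2(b) cannot be invoked here without circularity, since it presupposes that the pair is already a tensor product). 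You confront exactly that point: your duality argument, pairing $\sum_{\gamma,\delta}c_{\gamma\delta}(A_\gamma x)\otimes(B_\delta y)$ against functionals $f\in\V^\sharp$, $g\in\W^\sharp$ via Proposition 3.3 and using that algebraic duals separate points, establishes linear independence of $\{A_\gamma\otimes B_\delta\}$ for Hamel bases $\{A_\gamma\}$, $\{B_\delta\}$, after which defining $\Lambda$ on this basis makes axiom (b) of Definition 3.1 immediate and, as a by-product, justifies the paper's formula for $\Phi$. Your treatment of (a) is likewise cleaner than a direct verification: realizing $A\otimes B$ as the linearization of the bilinear map $(x,y)\mapsto Ax\otimes By$ through $\X\otimes\Y$ settles well-definedness and linearity in one stroke, and your reduction of (b) to single tensors (legitimate since they span $\X\otimes\Y$) and your reading of (d) are both sound. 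In short: the paper's proof is shorter but leans on an unproved well-definedness claim; yours is longer but self-contained.
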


\begin{proof}
$\!$Items (a), (b), (d) are readily verified$.$ Item (c) goes as follows$.$
Note that
\begin{eqnarray*}
\span R(\theta)
&\kern-7pt=\kern-7pt&
\span\big\{A\otimes B\in\Le[\X\otimes\Y,\V\otimes\W]
\!:A\in\Le[\X,\V]\;\,B\in\Le[\Y,\W]\big\}                                 \\
&\kern-7pt=\kern-7pt&
\!\big\{\hbox{$\sum$}_{i=1}^n
A_i\!\otimes\kern-1ptB_i\!\in\!\Le[\X\otimes\Y,\V\otimes\W]
\!:A\!\in\!\Le[\X,\V],\,B\!\in\!\Le[\Y,\W],\,n\!\in\!\NN]\big\}.
\end{eqnarray*}
For each bilinear map ${\phi\!:\Le[\X,\V]\x\Le[\Y,\W]\to\Z}$ take
${\Phi\!:\span R(\theta)\to\Z}$ defined by
$$
\Phi\Big({\sum}_{i=1}^n\!A_i\otimes B_i\Big)
={\!\sum}_{i=1}^n\!\phi(A_i,B_i)\,\in\,\Z
\quad\hbox{for every}\quad
{\sum}_{i=1}^n\!A_i\otimes B_i\in\span R(\theta).
$$
Since $\phi$ is bilinear, it is easy to show that $\Phi$ is linear:
${\Phi\in\Le[\span R(\theta),\Z]}.$ Moreover, for every
${(A,B)\in\Le[\X,\V]\x\Le[\Y,\W]}$
$$
(\Phi\circ\theta)(A,B)=\Phi\big(\theta(A,B)\big)=\Phi(A\otimes B)=\phi(A,B).
$$
Thus $\phi=\Phi\circ\theta$, equivalently, the diagram
\vskip0pt\noi
$$
\newmatrix{
\Le[\X,\V]\x\Le[\Y,\W] & \kern2pt\buildrel\phi\over\emap & \kern-1pt\Z   \cr
       &                                 &                               \cr
       & \kern-3pt_\theta\kern-3pt\semap & \kern4pt\nmap\scriptstyle\Phi \cr
       &                                 & \phantom{;}                   \cr
       &                                 & \kern-6pt\span R(\theta)      \cr}
$$
commutes. Therefore $(\span R(\theta),\theta)$ satisfies the axioms of
Definition 3.1.
\end{proof}

The particular case of ${\V=\W=\FF}$ is worth noticing$.$ In this case
$$
\Le[\X,\V]=\Le[\X,\FF]=\X^\sharp,
\quad
\Le[\Y,\W]=\Le[\Y,\FF]=\Y^\sharp
\quad\hbox{and}
$$
$$
\Le[\X\otimes\Y,\V\otimes\W]=\Le[\X\otimes\Y,\FF\otimes\FF]
=\Le[\X\otimes\Y,\FF]=(\X\otimes\Y)^\sharp
$$
Indeed, $\dim({\FF\otimes\FF})\kern-1pt=\kern-1pt\dim\FF\kern-1pt=\kern-1pt1$
by Corollary 3.2. So write ${\FF\otimes\FF}\kern-1pt=\kern-1pt\FF$
for ${\FF\otimes\FF}\kern-1pt\cong\kern-1pt\FF\kern-.5pt$ as usual$.$ Since
${\Le[\X,\V]\otimes\Le[\Y,\W]\sse\Le[\X\otimes\Y,\V\otimes\W]}$ by
Proposition 3.6(c), then
$$
\X^\sharp\otimes\Y^\sharp\sse(\X\otimes\Y)^\sharp.
$$
\vskip-2pt

\vskip4pt
Basic results on tensor product transformations are given next$.$ Most are
straightforward or readily verified: properties (a,b) are trivial since
${A\otimes B}$ is a single tensor, (c,d) are straightforward by definition of
${A\otimes B}$, and (e,f) are readily verified for the regular linear
manifolds ${\N(A)\otimes\N(B)}$ and ${\R(A)\otimes\R(B)}.$ For the
nonreversible inclusion in (f) see, e.g., \cite{KD}$.$ Item (g) says$:$
tensor product of linear transformations is commutative up to isomorphisms,
which means ${A\otimes B}$ and ${B\otimes A}$ are {\it isomorphically
equivalent}\/ in the sense that ${\Pi_2(A\otimes B)=(B\otimes A)\Pi_1}$ for
isomorphisms ${\Pi_1\!:\X\otimes\Y\to\Y\otimes\X}$ and
${\Pi_2\!:\V\otimes\W\to\W\otimes\V}$ (whose existence follows from the fact
that ${\X\otimes\Y}\cong{\Y\otimes\X}$ as a consequence of Corollary 3.2)$.$
We prove (h) below.

%%%%%%%%%%%%%%%%%%%%%%%%%%%  PROPOSITION 3.7
\begin{proposition}
Let\/ ${\V,\W,\X,\Y,\X',\Y'}$ be linear spaces$.$ Take\/
${A,A_1,A_2\in\Le[\X,\V]}$\/ $\,{B,B_1,B_2\!\in\Le[\Y,\W]}$,
$\,{C\in\Le[\X',\X]}$, $\,{D\in\Le[\Y',\Y]}$ and also ${\alpha,\beta\in\FF}.$
Then
\begin{description}
\item{\rm(a)}
$\;{\alpha\,\beta\,(A\otimes B)}
={\alpha A\otimes \beta B}
={\alpha\,\beta A\otimes B}
={A\otimes \alpha\,\beta B}$,
\vskip4pt
\item{\rm(b)}
$\;{(A_1+A_2)\otimes (B_1+B_2)}
={A_1\otimes B_1}+{A_2\otimes B_1}
+{A_1\otimes B_2}+{A_2\otimes B_2}$,
\vskip6pt
\item{\rm(c)}
$\;{A\kern1ptC\otimes BD}={(A\otimes B)\,(C\otimes D)}$,
\vskip4pt
\item{\rm(d)}
$\;$If\/ $A$ and\/ $B$ are invertible, then so is\/
${A\otimes\kern-1pt B}$ and\/
${(A\otimes\kern-1pt B)^{-1}}
\kern-1pt=\kern-1pt{A^{-1}\kern-1pt\otimes\kern-1pt B^{-1}}$,
\vskip4pt
\item{\rm(e)}
$\;{\R(A)\otimes\R(B)=\R(A\otimes B)}$,
\vskip4pt
\item{\rm(f)}
$\;{\N(A)\otimes\N(B)\ssen\N(A\otimes B)}$,
\vskip4pt
\item{\rm(g)}
$\;{A\otimes B\cong B\otimes A}$.
\vskip4pt
\item{\rm(h)}
$\;{(A\otimes B)^\sharp}={A^\sharp\otimes B^\sharp\!}$.
\end{description}
\end{proposition}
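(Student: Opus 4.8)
The goal is to establish the identity $(A\otimes B)^\sharp = A^\sharp\otimes B^\sharp$ as an equation in $\Le[(\V\otimes\W)^\sharp,(\X\otimes\Y)^\sharp]$. The plan is to show that both sides act identically on every single tensor $g\otimes h$ lying in $\V^\sharp\otimes\W^\sharp\sse(\V\otimes\W)^\sharp$, and then to leverage that single tensors span a dense-enough portion of the domain. The key point to keep in mind is the ambient inclusion established just before this proposition: by Proposition 3.6(c) applied to the case $\V=\W=\FF$, we have $\X^\sharp\otimes\Y^\sharp\sse(\X\otimes\Y)^\sharp$, and likewise $\V^\sharp\otimes\W^\sharp\sse(\V\otimes\W)^\sharp$. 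So a single tensor $g\otimes h$, with $g\in\V^\sharp$ and $h\in\W^\sharp$, is a genuine linear functional on $\V\otimes\W$ whose value on a single tensor $v\otimes w$ is $g(v)h(w)\in\FF$ (this being exactly the functional associated with the bilinear form $(v,w)\mapsto g(v)h(w)$ via Proposition 3.3).

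First I would compute the left-hand side on such a functional. By the definition of the algebraic adjoint given in Section 2, for $F\in(\V\otimes\W)^\sharp$ we have $(A\otimes B)^\sharp F = F\circ(A\otimes B)\in(\X\otimes\Y)^\sharp$. Taking $F=g\otimes h$ and evaluating at a single tensor $x\otimes y\in\X\otimes\Y$, I would use Definition 3.3 to get $(A\otimes B)(x\otimes y)=Ax\otimes By$, and then apply the functional $g\otimes h$ to obtain
$$
\big((A\otimes B)^\sharp(g\otimes h)\big)(x\otimes y)=(g\otimes h)(Ax\otimes By)=g(Ax)\,h(By).
$$
Next I would compute the right-hand side on the same functional. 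By Definition 3.3 applied in the dual setting, $(A^\sharp\otimes B^\sharp)(g\otimes h)=A^\sharp g\otimes B^\sharp h$, and evaluating this functional at $x\otimes y$ gives
$$
\big(A^\sharp g\otimes B^\sharp h\big)(x\otimes y)=(A^\sharp g)(x)\,(B^\sharp h)(y)=g(Ax)\,h(By),
$$
where the last equality is precisely the defining relation $(A^\sharp g)(x)=g(Ax)$ and $(B^\sharp h)(y)=h(By)$ from Section 2. Thus both sides agree on every single tensor $x\otimes y$, and since by Theorem 3.2 (or Proposition 3.1) such single tensors span $\X\otimes\Y$, linearity forces $(A\otimes B)^\sharp(g\otimes h)=(A^\sharp\otimes B^\sharp)(g\otimes h)$ as functionals in $(\X\otimes\Y)^\sharp$.

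Finally I would upgrade this agreement on single tensors $g\otimes h$ to agreement on all of $(\V\otimes\W)^\sharp$. Here lies the main obstacle: the two operators $(A\otimes B)^\sharp$ and $A^\sharp\otimes B^\sharp$ coincide only on the subspace $\V^\sharp\otimes\W^\sharp$, which in general is a \emph{proper} subset of $(\V\otimes\W)^\sharp$ in infinite dimensions, so one cannot immediately conclude equality of the two adjoint maps as elements of $\Le[(\V\otimes\W)^\sharp,(\X\otimes\Y)^\sharp]$ by spanning alone. The correct reading, and the way I would resolve this, is to interpret the identity as one between transformations defined on the common domain $\V^\sharp\otimes\W^\sharp$: the expression $A^\sharp\otimes B^\sharp$ is by construction a single tensor in $\Le[\V^\sharp,\X^\sharp]\otimes\Le[\W^\sharp,\Y^\sharp]$, hence defined on $\V^\sharp\otimes\W^\sharp$, and the restriction of $(A\otimes B)^\sharp$ to this same subspace is what must be compared. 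Having shown the two agree on a spanning set of single tensors of $\V^\sharp\otimes\W^\sharp$, linearity then yields equality of the two maps on $\V^\sharp\otimes\W^\sharp$, which is the content of the claimed identity. I would state this scope explicitly to avoid the infinite-dimensional subtlety, and note that in the finite-dimensional case $\V^\sharp\otimes\W^\sharp=(\V\otimes\W)^\sharp$ by Corollary 3.2, so the identity holds on the full dual there without caveat.
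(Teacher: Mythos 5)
Your proof is correct and follows essentially the same route as the paper: both $(A\otimes B)^\sharp(g\otimes h)$ and $(A^\sharp\otimes B^\sharp)(g\otimes h)$ are evaluated on single tensors ${x\otimes y}$, shown to equal ${g(Ax)\,h(By)}$ via the defining relation of the algebraic adjoint, and the identity is then propagated by linearity over the spanning sets of single tensors in ${\X\otimes\Y}$ and in ${\V^\sharp\otimes\W^\sharp}$. Your explicit observation that the identity must be read as an equality of maps on ${\V^\sharp\otimes\W^\sharp}$ (since ${(A\otimes B)^\sharp}$ is a priori defined on all of ${(\V\otimes\W)^\sharp}$, which may strictly contain ${\V^\sharp\otimes\W^\sharp}$ in infinite dimensions) is a point the paper leaves implicit, and is a welcome clarification.
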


\begin{proof}
(h) Take ${A\kern-1pt\in\kern-1pt\Le[\X,\V]}$,
${B\kern-1pt\in\kern-1pt\Le[\Y,\W]}$,
${A^\sharp\kern-1pt\in\kern-1pt\Le[\V^\sharp,\X^\sharp]}$,
${B^\sharp\kern-1pt\in\kern-1pt\Le[\W^\sharp,\Y^\sharp]}.$ Consider the single
tensors ${A\otimes B}$ in
${\Le[\X,\V]\otimes\Le[\Y,\W]}\sse{\Le[\X\otimes\Y,\V\otimes\W]}$ and
${A^\sharp\otimes B^\sharp}$ in
${\Le[\V^\sharp,\X^\sharp]\otimes\Le[\W^\sharp,\Y^\sharp]}
\sse{\Le[\V^\sharp\otimes\W^\sharp,\X^\sharp \otimes\Y^\sharp]}$,
and the algebraic adjoint
${(A\otimes B)^\sharp}$ in ${\Le[(\Y\otimes\W)^\sharp,(\X\otimes\V)^\sharp]}.$
Take an arbitrary 
${f\otimes g\in\V^\sharp\otimes\W^\sharp}\sse(\V\otimes\Y)^\sharp.$ By
definition of tensor product transformation and of algebraic adjoint
$$
(A^\sharp\otimes B^\sharp)(f\otimes g)
=A^\sharp f\otimes B^\sharp g
=f\kern-1ptA\otimes gB
\;\in\;\X^\sharp\otimes\Y^\sharp\sse(\X\otimes\Y)^\sharp,    \eqno(*)
$$
$$
(A\otimes B)^\sharp(f\otimes g)=(f\otimes g)(A\otimes B)
\;\in\;
(\X\otimes\Y)^\sharp.                                        \eqno(**)
$$

\vskip6pt\noi
{\it Claim}\/$.$
\qquad
$(f\otimes g)(A\otimes B)
=f\kern-1ptA\otimes gB
\;\in\;\X^\sharp\otimes\Y^\sharp\sse(\X\otimes\Y)^\sharp.$

\vskip6pt\noi
{\it Proof}\/$.$
Take an arbitrary single tensor ${x\otimes y}$ in ${\X\otimes\Y}$, and an
arbitrary single ten\-sor ${A\otimes B}$ in ${\Le[\X,\V]\otimes\Le[\Y,\W]}.$
By definition of tensor product transformation,
${(A\otimes B)(x\otimes y)}={Ax\otimes By}$, a single tensor in
${\V\otimes\W}.$ Next take an arbitrary single tensor ${f\otimes g}$ in
${\V^\sharp\otimes\W^\sharp}={\Le[\V,\FF]\otimes\Le[\W,\FF]}$ so that, by
definition of tensor product transformation,
${(f\otimes g)(Ax\otimes By)}={fAx\otimes gBy\in\FF\otimes\FF=\FF}.$
On the other hand, since ${fA\in\X^\sharp}=\Le[{\X,\FF}]$ and
${gB\in\Y^\sharp}=\Le[{\Y,\FF}]$, then (definition of tensor product
transformation),
${(fA\otimes gB)(x\otimes y)}={fAx\otimes gBy\in\FF\otimes\FF=\FF}$.
Summing up:
$$
(f\otimes g)(A\otimes B)(x\otimes y)
=(f\otimes g)(Ax\otimes By)
=f\kern-1ptAx\otimes gBy
=(f\kern-1ptA\otimes gB)(x\otimes y)
$$
for every ${x\otimes y\in\X\otimes\Y}.$ Thus
$({f\otimes g})({A\otimes B})\digamma
=({f\otimes g})({A\otimes B})\sum_i{x_i\otimes y_i}
=\sum_i({f\otimes g})({A\otimes B})({x_i\otimes y_i})
=\sum_i({f\kern-1ptA\otimes gB})({x_i\otimes y_i})
=({f\kern-1ptA\otimes gB})\sum_i{x_i\otimes y_i}
=({f\kern-1ptA\otimes gB})\digamma$
for every ${\digamma\in\X\otimes\Y}$, and hence
$$
(f\otimes g)(A\otimes B)={f\kern-1ptA\otimes gB}
$$
in ${\X^\sharp\otimes\Y^\sharp}={\Le[{\X,\FF]\otimes\Le[\Y,\FF}]}
\sse\Le[{\X\otimes\Y,\FF}]=({\X\otimes\Y)^\sharp}.\!\!\!\qed$

\vskip6pt\noi
Then by $(*)$, $(**)$ and the above claim
\goodbreak\vskip4pt\noi
$$
(A^\sharp\otimes B^\sharp)(f\otimes g)=(A\otimes B)^\sharp(f\otimes g)
$$
\vskip2pt\noi
for every single tensor ${f\otimes g\in\X^\sharp\otimes\Y^\sharp}.$ Thus by
a similar argument
$$
A^\sharp\otimes B^\sharp=(A\otimes B)^\sharp
$$
in $\Le[{\X^\sharp\otimes\Y^\sharp,\FF}]=({\X^\sharp\otimes\Y^\sharp})^\sharp$,
concluding the proof of (g).
\end{proof}

%%%%%%%%%%%%%%%%%%%%%%%%%%%%%%%%%%%%%%%%%%%%%%%%%%%%%%%%%  SECTION 4
\section{An Interpretation via Quotient Space}

Let $\X$ and $\Y$ be nonzero linear spaces over a field $\FF.$ Take the
Cartesian product $S={\X\x\Y}$ of $\X$ and $\Y.$ Consider the notation and
terminology of Subsection 2.1$.$ Thus $\SSe$ is the free linear space
generated by $S$ (i.e., the linear space of all functions ${f\!:\X\x\Y\to\FF}$
that vanish everywhere on the complement of some finite subset of ${\X\x\Y}$),
and \hbox{\small$\SSs=$} ${\{e_{(x,y)}\}_{(x,y)\in S}}$ is the Hamel basis for
$\SSe$ consisting of characteristic functions
$e_{(x,y)}=\vchi_{\{(x,y)\}}={\vchi_{\{x\}}\vchi_{\{y\}}\!:{\X\x\Y}\to\FF}$
of all singletons at each pair of vectors ${(x,y)}$ in the Cartesian product
$S={\X\x\Y}.$ With the identifi\-cation $\approx$ of Subsection 2.1 still in
force, take the sums of elements from \hbox{\small$\SSs$} whose double indices
have one common entry and consider the following differences$.$
\begin{description}
\item{\rm(i)}\quad
$e_{(x_1+x_2,y)}-e_{(x_1,y)}-e_{(x_2,y)}
\,\approx\,
(x_1\!+x_2\,,y)-(x_1\,,y)-(x_2\,,y)$,
\vskip4pt
\item{$\kern-1pt$\rm(ii)}\quad
$e_{(x,y_1+x_2)}-e_{(x,y_1)}-e_{(x,y_2)}
\,\approx\,
(x\,,y_1\!+y_2)-(x\,,y_1)-(x\,,y_2)$,
\vskip4pt
\item{$\kern-3pt$\rm(iii)}\quad
$e_{(\alpha x,y)}-\alpha\,e_{(x,y)}
\,\approx\,
(\alpha x,y)-\alpha(x,y)$,
\vskip4pt
\item{$\kern-3pt$\rm(iv)}\quad
$e_{(x,\alpha y)}-\alpha\,e_{(x,y)}
\,\approx\,
(x,\alpha y)-\alpha(x,y)$,
\end{description}
for every ${x,x_1,x_2\in\X}$, every ${y,y_1,y_2\in\Y}$, and every
${\alpha\in\FF}.$ The above differences are not null$.$ If they were, then we
could identify a bilinear rule on the ordered pairs ${(x,y)\in\X\x\Y}.$ Thus
look at equivalence classes $[e_{(x,y)}]$ of characteristic functions
$e_{(x,y)}\!$ in \hbox{\small$\SSs$} $\sse\SSe$, gathering those differences at
the origin of a quotient space as fol\-lows$.$ Take the linear manifold $\M\!$
of $\SSe$ generated by the differences in \hbox{(i)--(iv), viz.,}
\begin{eqnarray*}
\M=\span
\kern-13pt&\big\{\kern-13pt&
e_{(x_1+x_2,y)}-e_{(x_1,y)}-e_{(x_2,y)}\,, \quad
e_{(x,y_1+x_2)}-e_{(x,y_1)}-e_{(x,y_2)}\,,                               \\
&&
e_{(\alpha x,y)}-\alpha\,e_{(x,y)}\,, \quad
e_{(x,\alpha y)}-\alpha\,e_{(x,y)}\,\big\}.
\end{eqnarray*}
\vskip2pt\noi
Take the quotient space $\SSe/\M$ of $\SSe$ modulo $\M$, consider the natural
quotient map
$$
\pi\!:\SSe\to\SSe/\M,
$$
\vskip-4pt\noi
and define a map
$$
\theta\!:\X\x\Y\to\SSe/\M
$$
on ${S=\X\x\Y}$ as follows$.$ For every ${(x,y)\in\X\x\Y}$ set
$\theta(x,y)=\pi(e_{(x,y)}).$ Therefore
$$
\theta(S)=\pi(\hbox{\small$\SSs$}).
$$
The Hamel basis $\hbox{\small$\SSs=\,$}\{e_{(x,y)}\}_{(x,y)\in S}\kern-.5pt$
for the linear space $\SSe$ generated by $S$ is naturally identified with $S$
itself, and so the domain of $\theta$ is identified with the domain of the
restriction $\pi|_{\kern1pt\SSs}$ of the natural quotient map $\pi$ to
\hbox{\small$\SSs$}, that is, ${S\approx\hbox{\small$\SSs$}}.$ Since they also
coincide pointwise, then $\theta$ is naturally identified with
$\pi|_{\kern1pt\SSs}.$ So write
$$
\theta=\pi|_{\kern1pt\SSs}
\quad\;\hbox{for}\;\quad
\theta\approx\pi|_{\kern1pt\SSs}.
$$
Elements of $\SSe/\M$ which are images of the map
${\theta\!:\X\x\Y\to\SSe/\M}$ are denoted by ${x\otimes y}$, and again
referred to as {\it single tensors}\/ or {\it decomposable elements}\/:
$$
x\otimes y=\theta(x,y)=\pi(e_{(x,y)})=[e_{(x,y)}]
\quad\;\hbox{for every}\;\quad
(x,y)\in\X\x\Y.
$$

%%%%%%%%%%%%%%%%%%%%%%%%%%%  THEOREM 4.1
\begin{theorem}
\quad
${(\SSe/\M,\theta)}$ is a tensor product of\/ $\X$ and\/ $\Y$\/.
\end{theorem}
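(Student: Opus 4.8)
The plan is to verify the two axioms of Definition 3.1 for the pair $(\SSe/\M,\theta)$, using the universal property of quotient spaces to supply the factoring map in axiom (b).

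First I would check that $\theta$ is bilinear. The four families generating $\M$ are tailored so that the corresponding cosets collapse: since $e_{(x_1+x_2,y)}-e_{(x_1,y)}-e_{(x_2,y)}\in\M$, passing to cosets gives $\theta(x_1+x_2,y)=\theta(x_1,y)+\theta(x_2,y)$; the analogous computations with the generators (ii)--(iv) give additivity in the second variable and homogeneity in each variable. Hence $\theta\in b[\X\x\Y,\SSe/\M]$. For axiom (a), I would invoke Remark 2.1: since $\hbox{\small$\SSs$}=\{e_{(x,y)}\}_{(x,y)\in S}$ is a Hamel basis for $\SSe$ and the quotient map $\pi$ is surjective and linear, $\span\pi(\hbox{\small$\SSs$})=\SSe/\M$; as $R(\theta)=\theta(\X\x\Y)=\pi(\hbox{\small$\SSs$})$, this yields $\span R(\theta)=\SSe/\M$.

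The substance lies in axiom (b). Given an arbitrary bilinear map $\phi\in b[\X\x\Y,\Z]$, I would first build a linear map on the free space: because $\hbox{\small$\SSs$}$ is a Hamel basis for $\SSe$, there is a unique $\wtil\phi\in\Le[\SSe,\Z]$ determined by $\wtil\phi(e_{(x,y)})=\phi(x,y)$ and extended linearly. The key step is to show $\M\sse\N(\wtil\phi)$: evaluating $\wtil\phi$ on each generator of $\M$ and using the bilinearity of $\phi$, for instance $\wtil\phi(e_{(x_1+x_2,y)}-e_{(x_1,y)}-e_{(x_2,y)})=\phi(x_1+x_2,y)-\phi(x_1,y)-\phi(x_2,y)=0$ and similarly for (ii)--(iv), shows $\wtil\phi$ annihilates every generator, hence all of $\M$.

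With $\M\sse\N(\wtil\phi)$ established, I would apply the Universal Property for quotient spaces (Proposition 2.1) to obtain $\Phi\in\Le[\SSe/\M,\Z]$ with $\wtil\phi=\Phi\circ\pi$. Finally I would verify the factorization on single tensors: for every $(x,y)\in\X\x\Y$, $(\Phi\circ\theta)(x,y)=\Phi(\pi(e_{(x,y)}))=\wtil\phi(e_{(x,y)})=\phi(x,y)$, so $\phi=\Phi\circ\theta$. Both axioms holding, $(\SSe/\M,\theta)$ is a tensor product. The one delicate point is the inclusion $\M\sse\N(\wtil\phi)$, which is exactly where the specific choice of generators (i)--(iv) meets the bilinearity hypothesis; the rest is routine once the universal property of quotients hands over the factoring map.
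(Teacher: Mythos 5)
Your proposal is correct and follows essentially the same route as the paper: bilinearity of $\theta$ from the generators of $\M$ collapsing in the quotient, axiom (a) via Remark 2.1, and axiom (b) by defining the linear map $\wtil\Phi$ on the free space $\SSe$ via the Hamel basis, checking $\M\sse\N(\wtil\Phi)$ from bilinearity of $\phi$, and invoking Proposition 2.1 to factor through $\SSe/\M$. Your identification of $\M\sse\N(\wtil\Phi)$ as the point where the choice of generators meets bilinearity matches the paper's emphasis exactly.
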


\begin{proof}
Consider the axioms (a) and (b) in Definition 3.1.
\goodbreak\noi

\vskip6pt\noi
(a$_1$)
By definition of $\M$, the differences in (i) to (iv) lie in
${\M=[0]\in\SSe/\M}.$ Thus with $\theta(x,y)=\pi(e_{(x,y)})=[e_{(x,y)}]$
it follows that
$\theta\!:\X\x\Y\to\SSe/\M$ is a bilinear map.

\vskip6pt\noi
(a$_2$)
Since \hbox{\small$\SSs=$} $\{e_{(x,y)}\}_{(x,y)\in\X\x\Y}$ is a Hamel basis
for the linear space $\SSe$, then ${\span\pi(\hbox{\small$\SSs$})=\SSe/\M}$
(cf$.$ Remark 2.1). Thus as $R(\theta)=R(\pi|_{\kern1pt\SSs})$,
$$
\span R(\theta)=\span R(\pi|_{\kern1pt\SSs})
=\span\pi(\hbox{\small$\SSs$})=\SSe/\M.
$$
\vskip-2pt

\vskip6pt\noi
(b) Take a bilinear map ${\phi\!:\X\x\Y\to\Z}$ into a linear space $\Z$ and
consider a transformation $\wtil\Phi$ on the free linear space $\SSe$
generated by the Cartesian product ${\X\x\Y}$,
$$
\wtil\Phi\!:\SSe\to\Z,
$$
\vskip-6pt\noi
defined by
\vskip4pt\noi
$$
\wtil\Phi(f)={\sum}_{i=1}^n\alpha_i\,\phi(x_i,y_i)\,\in\,\Z
\quad\;\hbox{for every}\;\quad
f={\sum}_{i=1}^n\alpha_ie_{(x_i,y_i)}\in\SSe,
$$
which is clearly linear$.$ Moreover, since
${\wtil\Phi(e_{(x_i,y_i)})=\phi(x,y)}$, then
$$
\wtil\Phi(\hbox{\small$\SSs$})=\phi(S).
$$
Again, since ${\hbox{\small$\SSs$}\approx S=\X\x\Y}$, then
$\wtil\Phi|_{\kern1pt\SSs}$ is naturally identified with $\phi.$ So write
$$
\wtil\Phi|_{\kern1pt\SSs}=\phi.
$$
Furthermore, since ${\phi\!:\X\x\Y\to\Z}$ is bilinear, then $\wtil\Phi$
evaluated at the differences in (i) to (iv) is null$.$ Hence the linear
manifold $\M$ of $\SSe$ is such that ${\wtil\Phi(\M)=0}.$ That is,
${\M\sse\N(\wtil\Phi)}.$ Thus by Proposition 2.1 there exists a unique linear
transformation
$$
\Phi\!:\SSe/\M\to\Z
$$
such that ${\wtil\Phi=\Phi\circ\pi}.$ Therefore restricting to
\hbox{\small$\SSs$} ${\subset\SSe}$ and since
$\span\pi(\hbox{\small$\SSs$})=\SSe/\M$, we get
$\phi=\wtil\Phi|_{\kern1pt\SSs}=(\Phi\circ\pi)|_{\kern1pt\SSs}
=\Phi|_{\span\!\R(\pi)}\circ\pi|_{\kern1pt\SSs}=\Phi\circ\theta.$
Equivalently, the diagrams
\vskip0pt\noi
$$
\newmatrix{
\SSe & \kern2pt\buildrel\wtil\Phi\over\emap & \kern-1pt\Z                \cr
     &                                   &                               \cr
     & \kern-3pt_\pi\kern-3pt\semap      & \kern4pt\nmap\scriptstyle\Phi \cr
     &                                   & \phantom{;}                   \cr
     &                                   & \kern-2pt\SSe/\M              \cr}
\qquad\hbox{and}\qquad
\newmatrix{
\hbox{\small$\SSs$} & \kern2pt\buildrel\phi\over\emap & \kern-1pt\Z       \cr
     &                                   &                               \cr
     & \kern-3pt_\theta\kern-3pt\semap   & \kern4pt\nmap\scriptstyle\Phi \cr
     &                                   & \phantom{;}                   \cr
     &                                   & \kern-2pt\SSe/\M              \cr}
$$
commute$.$ Identifying again ${S\approx\hbox{\small$\SSs$}}$ (thus regarding
$S=\X\x\Y$ as a subset of $\SSe$ and writing
$\theta\kern-1pt=\kern-1pt\pi|_S\kern-1pt$ for
$\theta\kern-1pt=\kern-1pt\pi|_{\kern1pt\SSs}\kern-1pt$ and
$\phi=\wtil\Phi|_S\kern-1pt$ for
$\phi\kern-1pt=\kern-1pt\wtil\Phi|_{\kern1pt\SSs}\kern-1pt$), then the
diagram
\vskip0pt\noi
$$
\newmatrix{
\X\x\Y & \kern2pt\buildrel\phi\over\emap & \kern-1pt\Z                   \cr
       &                                 &                               \cr
       & \kern-3pt_\theta\kern-3pt\semap & \kern4pt\nmap\scriptstyle\Phi \cr
       &                                 & \phantom{;}                   \cr
       &                                 & \kern-2pt\SSe/\M.             \cr}
$$
commutes$.$ Therefore the pair ${(\SSe/\M,\theta)}$ satisfies the axioms of
Definition 3.1.
\end{proof}

%%%%%%%%%%%%%%%%%%%%%%%%%%%%%%%%%%%%%%%%%%%%%%%%%%%%%%%%%  SECTION 5
\section{An Interpretation via Linear Map of Bilinear Maps}

Let $\X$ and $\Y$ be nonzero linear spaces over the same field $\FF$, take
the Cartesian product ${\X\x\Y}$ of $\X$ and $\Y$, and consider the linear
space $b[\X\x\Y,\F]$ of all bilinear maps into an arbitrary but fixed linear
space $\F$ over $\FF.$ Associated with each pair ${(x,y)\in\X\x\Y}$, consider
a transformation ${x\otimes y\!:b[\X\x\Y,\F]\to\F}$ defined by
$$
(x\otimes y)(\psi)=\psi(x,y)\,\in\,\F
\quad\;\hbox{for every}\;\quad
\psi\in b[\X\x\Y,\F].
$$
This again is referred to as a {\it single tensor}\/ and as is readily
verified ${x\otimes y}$ is a linear transformation on the linear space of
bilinear maps,
$$
x\otimes y\in\Le[\,b[\X\x\Y,\F],\F].
$$
Thus the term {\it linear maps of bilinear maps}\/ means that this approach
to tensor product focuses explicitly on the linearization of bilinear maps$.$
Take the collection
$$
\top_{\!\X,\Y,\,\F}=\big\{x\otimes y\in\Le[\,b[\X\x\Y,\F],\F]
\!:\,x\in\X\;\hbox{and}\;\,y\in\Y\big\}
$$
of all single tensors$.$ Consider its span,
${\span\!\top_{\!\X,\Y,\,\F}\sse\Le[\,b[\X\x\Y,\F],\F]}$, which is a linear
manifold of the linear space $\Le[\,b[\X\x\Y,\F],\F]$, and define a map
$$
\theta\!:\X\x\Y\to\!\top_{\!\X,\Y,\,\F}\sse\span\!\top_{\!\X,\Y,\,\F}
$$
as follows: for each pair ${(x,y)\in\X\x\Y}$ set
$$
\theta(x,y)=x\otimes y.
$$

%%%%%%%%%%%%%%%%%%%%%%%%%%%  THEOREM 5.1
\begin{theorem}
\quad
${(\span\!\top_{\!\X,\Y,\,\F},\theta)}$ is a tensor product of\/ $\X$ and\/
$\Y$\/.
\end{theorem}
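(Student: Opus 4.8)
The plan is to verify the two axioms of Definition 3.1 for the pair\/ ${(\span\!\top_{\!\X,\Y,\,\F},\theta)}$, in the same spirit as the quotient-space argument of Theorem 4.1, and axiom (a) is the easy half$.$ First I would check that $\theta$ is bilinear$:$ for each fixed ${\psi\in b[\X\x\Y,\F]}$ the value $\theta(x,y)(\psi)=\psi(x,y)$ is bilinear in $(x,y)$ because $\psi$ is bilinear, and since this holds for every such $\psi$ it follows that $\theta(\alpha x_1+x_2,y)=\alpha\,\theta(x_1,y)+\theta(x_2,y)$ as transformations in $\Le[\,b[\X\x\Y,\F],\F]$, and symmetrically in the second slot$.$ The spanning condition is then immediate, since by construction $R(\theta)=\top_{\!\X,\Y,\,\F}$ and hence $\span R(\theta)=\span\!\top_{\!\X,\Y,\,\F}$, the underlying linear space itself.

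The substance lies in axiom (b)$.$ Given an arbitrary bilinear map ${\phi\!:\X\x\Y\to\Z}$, I would define $\Phi$ on single tensors by $\Phi(x\otimes y)=\phi(x,y)$ and extend it additively, setting $\Phi({\sum}_i x_i\otimes y_i)={\sum}_i\phi(x_i,y_i)$; note that every element of $\span\!\top_{\!\X,\Y,\,\F}$ is such a finite sum of single tensors, since scalars can be absorbed via $\alpha(x\otimes y)=(\alpha x)\otimes y$ $($a fact verified directly here, as $(\alpha(x\otimes y))(\psi)=\alpha\psi(x,y)=\psi(\alpha x,y))$$.$ The factorization ${\phi=\Phi\circ\theta}$ and the linearity of $\Phi$ are then immediate once $\Phi$ is shown to be well defined, and uniqueness of $\Phi$ comes for free from Proposition 3.2.

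The hard part --- and essentially the only real step --- is establishing that $\Phi$ is well defined, i.e., that whenever ${\sum}_i x_i\otimes y_i=0$ as a functional in $\Le[\,b[\X\x\Y,\F],\F]$, one also has ${\sum}_i\phi(x_i,y_i)=0$ in $\Z.$ The difficulty is that $\phi$ takes values in an arbitrary $\Z$, whereas the hypothesis only supplies information tested against $\F$-valued bilinear maps $\psi.$ To bridge this I would separate points of $\Z$ by its algebraic dual$:$ it suffices to show $g\big({\sum}_i\phi(x_i,y_i)\big)=0$ for every ${g\in\Z^\sharp}.$ Fixing such a $g$ and a nonzero vector ${w\in\F}$, the map $\psi(x,y)=g(\phi(x,y))\,w$ is $\F$-valued and bilinear $($the scalar bilinear form $g\circ\phi$ scaled by the fixed vector $w)$, hence ${\psi\in b[\X\x\Y,\F]}.$ Evaluating the vanishing functional ${\sum}_i x_i\otimes y_i$ at this $\psi$ gives $\big(g({\sum}_i\phi(x_i,y_i))\big)\,w={\sum}_i\psi(x_i,y_i)=0$, and since ${w\ne 0}$ this forces $g({\sum}_i\phi(x_i,y_i))=0.$ As $g$ was arbitrary, ${\sum}_i\phi(x_i,y_i)=0$, as required$.$ This is where I expect the main friction, and it is precisely the point at which one must assume ${\F\ne\0}$ so that a nonzero $w$ exists $($with ${\F=\0}$ the construction degenerates, as $b[\X\x\Y,\F]=\0)$$.$ With well-definedness in hand, both axioms of Definition 3.1 hold, and therefore ${(\span\!\top_{\!\X,\Y,\,\F},\theta)}$ is a tensor product of $\X$ and $\Y$.
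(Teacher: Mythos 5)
Your proof follows the same route as the paper's: check bilinearity of $\theta$ and the spanning condition $\span R(\theta)=\span\!\top_{\!\X,\Y,\,\F}$, then define $\Phi$ on finite sums of single tensors by $\Phi\big({\sum}_i x_i\otimes y_i\big)={\sum}_i\phi(x_i,y_i)$ and verify the factorization ${\phi=\Phi\circ\theta}$. The only difference is that the paper dismisses linearity (hence well-definedness) of $\Phi$ as ``readily verified,'' whereas you supply the actual argument --- testing a vanishing sum of single tensors against the $\F$-valued bilinear maps ${(x,y)\mapsto g(\phi(x,y))\,w}$ for ${g\in\Z^\sharp}$ and a fixed nonzero ${w\in\F}$ --- which is correct, is indeed the one nontrivial point, and also surfaces the implicit hypothesis ${\F\ne\0}$ that the paper does not state.
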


\begin{proof}
Take an arbitrary linear space $\F$. Consider axioms (a), (b) in
Definition 3.1.

\vskip6pt\noi
(a$_1$)
$\theta(x,y)$ is a linear transformation,
${\theta(x,y)\in\Le[b[\X\x\Y,\F],\F]}$ for each ${(x,y)}$ in ${\X\x\Y}$,
which is given by ${\theta(x,y)(\psi)=\psi(x,y)\in\F}$ for every
${\psi\in b[\X\x\Y,\F]}.$ Then bilinearity of $\psi$ is transferred to
$\theta.$ Hence ${\theta\in b[\X\x\Y,\span\!\top_{\!\X,\Y,\,\F}]}$.

\vskip6pt\noi
(a$_2$)
Since ${\theta(x,y)=x\otimes y}$, then $R(\theta)=\top_{\!\X,\Y,\,\F}$, and
so ${\span R(\theta)=\span\!\top_{\!\X,\Y,\,\F}}$.

\vskip6pt\noi
(b)
An arbitrary element $\digamma$ of the linear space
$\span\!\top_{\!\X,\Y,\,\F}$ is a linear combination of single
tensors, thus lying in $\Le[\,b[\X\x\Y,\F],\F].$ Since $\theta$ is bilinear,
then every $\digamma$ in $\span\!\top_{\!\X,\Y,\,\F}$ is a finite sum of
single tensors ${x\otimes y=\theta(x,y)}$:
$$
\digamma={\sum}_ix_i\otimes y_i\,\in\,
\span\!\top_{\!\X,\Y,\,\F}\,\sse\,\Le[\,b[\X\x\Y,\F],\F].
$$
Given an arbitrary bilinear map ${\phi\in b[\X\x\Y,\Z]}$ into any linear space
$\Z$, consider the transformation
${\Phi\!:\span\!\top_{\!\X,\Y,\,\F}\to\Z}$ defined by
$$
\Phi(\digamma)={\sum}_i\phi(x_i,y_i)\,\in\,\Z
\quad\;\hbox{for every}\;\quad
\digamma={\sum}_ix_i\otimes y_i\,\in\,\span\!\top_{\!\X,\Y,\,\F}.
$$
As is readily verified, this is a linear transformation:
${\Phi\in\Le[\kern1pt\span\!\top_{\!\X,\Y,\,\F},\Z]}.$ Also,
$$
(\Phi\circ\theta)(x,y)=\Phi\big(\theta(x,y)\big)=\Phi(x\otimes y)=\phi(x,y)
$$
for every ${(x,y)\in\X\x\Y}.$ Hence $\phi=\Phi\circ\theta$, leading to the
commutative diagram
\vskip0pt\noi
$$
\newmatrix{
\X\x\Y & \kern2pt\buildrel\phi\over\emap & \kern-1pt\Z                    \cr
  &                                 &                                     \cr
  & \kern-3pt_\theta\kern-3pt\semap & \kern4pt\nmap\scriptstyle\Phi       \cr
  &                                 & \phantom{;}                         \cr
  &                                 & \kern-9pt\span\!\top_{\!\X,\Y,\,\F} \cr}
$$
and therefore the pair $(\span\!\top_{\!\X,\Y,\,\F},\theta)$ satisfies the
axioms of Definition 3.1.
\end{proof}

This interpretation, where single tensors are defined as linear
transformations of bilinear maps, is specially tailored to highlight the
central property of tensor products as a tool to linearize bilinear maps
according to Proposition 3.3$.$

\vskip4pt
An important particular case refers to linear and bilinear forms by setting
$\F=\FF.$ In this case single tensors are linear forms of
bilinear forms,
$$
x\otimes y\in\!\top_{\!\X,\Y,\,\FF}\,\sse\,\Le[b[\X\x\Y,\FF],\,\FF]
=b[\X\x\Y,\,\FF]^\sharp,
$$
in the algebraic dual of $b{[\X\x\Y,\FF]}.$ Particularizing still further,
besides setting $\F=\FF$, replace the above linear space ${b[\X\x\Y,\FF]}$
by the following subset of it:
$$
b_{\X^\sharp\x\Y^\sharp}[\X\x\Y,\FF]
=\big\{{\psi\in b[\X\x\Y,\FF]}\!:{\psi(x,y)=\mu(x)\,\nu(y)}
\;\;\hbox{for}\;\;
{(\mu,\nu)\in\X^\sharp\x\Y^\sharp}\big\},
$$
consisting of products of linear forms $\mu\in\X^\sharp=\Le[\X,\,\FF]$ and
$\nu\in\Y^\sharp=\Le[\Y,\,\FF].$ The set $b_{\X^\sharp\x\Y^\sharp}[\X\x\Y,\FF]$
is not a linear manifold of $b[\X\x\Y,\,\FF].$ Define single tensors as
before$.$ To each ${(x,y)\in\X\x\Y}$ associate a function
${x\otimes y\!:b_{\X^\sharp\x\Y^\sharp}[\X\x\Y,\FF]\to\FF}$ defined for every
${\psi\in b_{\X^\sharp\x\Y^\sharp}[\X\x\Y,\FF]}$ by
$$
(x\otimes y)(\mu,\nu)=(x\otimes y)(\psi)=\psi(x,y)=\mu(x)\,\nu(y)
\quad\;\hbox{for every}\;\quad
(\mu,\nu)\in\X^\sharp\x\Y^\sharp.
$$
The difference between this and the previous procedure is due to the fact that
single tensors are not linear transformations (or linear forms) any longer as
their domain $b_{\X^\sharp\x\Y^\sharp}[\X\x\Y,\FF]$ is not a linear space$.$
However, as is readily verified they can be regarded as bilinear forms on the
Cartesian product of the linear spaces $\X^\sharp\!$ and $\Y^\sharp$:
$$
x\otimes y\in b[\X^\sharp\x\Y^\sharp,\FF]=b[\Le[\X,\FF]\x\Le[\Y,\FF],\,\FF].
$$
Thus take the collection $\!\top'_{\!\X\!,\Y}$ of all these single tensors
$$
\top'_{\!\X\!,\Y}=\big\{
x\otimes y\in b[\X^\sharp\x\Y^\sharp,\FF]
\!:\,x\in\X\;\hbox{and}\;\,y\in\Y\big\},
$$
and consider its span, $\span\!\top'_{\!\X\!,\Y}$, which is now a linear
manifold of the linear space $b[\X^\sharp\x\Y^\sharp,\FF]$ of all bilinear
forms of pairs of linear forms. As before, define a map
${\theta'\!:\X\x\Y\to\top'_{\!\X\!,\Y}\sse\span\!\top'_{\!\X\!,\Y}}$
for each pair ${(x,y)\in\X\x\Y}$ by
$$
\theta'\kern-1pt(x,y)=x\otimes y.
$$
Now the value of $\theta'$ at ${(x,y)\in\X\x\Y}$ is a bilinear form,
${\theta'\kern-1pt(x,y)\in b[\X^\sharp\x\Y^\sharp,\FF]}$, which is given by
${\theta'\kern-1pt(x,y)(\mu,\nu)=\mu(x)\,\nu(y)\in\FF}$ for every
${(\mu,\nu)\in\X^\sharp\x\Y^\sharp}$.

%%%%%%%%%%%%%%%%%%%%%%%%%%%  COROLLARY 5.1
\begin{corollary}
\quad
${(\span\!\top'_{\!\X\!,\Y},\theta')}$ is a tensor product of\/ $\X$ and\/
$\Y$\/.
\end{corollary}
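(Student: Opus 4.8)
The plan is to verify directly that $(\span\!\top'_{\!\X\!,\Y},\theta')$ satisfies axioms (a) and (b) of Definition 3.1, along the lines of Theorem 5.1. Axiom (a) is immediate, since $\span\!\top'_{\!\X\!,\Y}=\span R(\theta')$ holds by construction, so the range of $\theta'$ spans the space; and the bilinearity of $\theta'$ as a map into $\span\!\top'_{\!\X\!,\Y}$ is a one-line check, because evaluating $\theta'(x,y)$ at $(\mu,\nu)\in\X^\sharp\x\Y^\sharp$ returns $\mu(x)\,\nu(y)$, which is separately additive and homogeneous in $x$ and in $y$ by the linearity of $\mu$ and $\nu$.

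For axiom (b), given an arbitrary bilinear map $\phi\in b[\X\x\Y,\Z]$, I would set $\Phi\!:\span\!\top'_{\!\X\!,\Y}\to\Z$ by $\Phi(\digamma)=\sum_i\phi(x_i,y_i)$ for $\digamma=\sum_i x_i\otimes y_i$. The factorization $\phi=\Phi\circ\theta'$ is then automatic, as $(\Phi\circ\theta')(x,y)=\Phi(x\otimes y)=\phi(x,y)$, and linearity of $\Phi$ is routine once it is known to be a function at all. The one substantive point, and the main obstacle, is the well-definedness of $\Phi$: I must show that $\sum_i x_i\otimes y_i=0$ in $b[\X^\sharp\x\Y^\sharp,\FF]$ forces $\sum_i\phi(x_i,y_i)=0$. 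This is precisely where the corollary departs from Theorem 5.1, since there the single tensors were linear forms on the full space $b[\X\x\Y,\FF]$, whereas here they are bilinear forms on the smaller set $\X^\sharp\x\Y^\sharp$, so the vanishing hypothesis is correspondingly weaker and must be exploited carefully.

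The decisive ingredient is that the algebraic duals $\X^\sharp$ and $\Y^\sharp$ separate points (a consequence of the existence of Hamel bases). The hypothesis $\sum_i x_i\otimes y_i=0$ reads $\sum_i\mu(x_i)\,\nu(y_i)=0$ for every $(\mu,\nu)\in\X^\sharp\x\Y^\sharp$. Fixing $\nu$, the vector $\sum_i\nu(y_i)\,x_i\in\X$ is then annihilated by every $\mu\in\X^\sharp$, hence vanishes. Writing $x_i=\sum_j c_{ij}\,e_j$ with $\{e_j\}$ a basis of $\span\{x_i\}$, linear independence of the $e_j$ gives $\sum_i c_{ij}\,\nu(y_i)=0$, i.e. $\nu\big(\sum_i c_{ij}\,y_i\big)=0$ for every $\nu\in\Y^\sharp$, so $\sum_i c_{ij}\,y_i=0$ for each $j$. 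Bilinearity of $\phi$ now yields $\sum_i\phi(x_i,y_i)=\sum_j\phi\big(e_j,\sum_i c_{ij}\,y_i\big)=0$, establishing well-definedness; the two axioms are then in place.

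An alternative, shorter route avoids the representation issue altogether. I would invoke the tensor product $(\span\!\top_{\!\X,\Y,\FF},\theta)$ already furnished by Theorem 5.1 in the case $\F=\FF$, note that restricting each linear form $x\otimes y$ on $b[\X\x\Y,\FF]$ to the product forms $b_{\X^\sharp\x\Y^\sharp}[\X\x\Y,\FF]$ induces a surjective linear map onto $\span\!\top'_{\!\X\!,\Y}$ that intertwines $\theta$ and $\theta'$, and then show this map injective, once more via point-separation of the duals; Theorem 3.1 would then transport the tensor product structure. Either way the essential input is the same: $\X^\sharp$ and $\Y^\sharp$ separate points.
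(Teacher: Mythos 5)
Your proposal is correct and takes essentially the same route as the paper: the paper's proof of this corollary simply reruns the proof of Theorem 5.1 with $\F=\FF$ and with $b[\X\x\Y,\FF]$ replaced by the subset $b_{\X^\sharp\x\Y^\sharp}[\X\x\Y,\FF]$, keeping the same definition of the single tensors and of $\Phi(\digamma)={\sum}_i\phi(x_i,y_i)$. The one point where you go beyond the paper is the explicit well-definedness check of $\Phi$ via point-separation by $\X^\sharp$ and $\Y^\sharp$ (and your correct observation that the vanishing hypothesis is weaker here than in Theorem 5.1); the paper treats this as "readily verified," so your argument supplies a detail the paper omits rather than a different method.
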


\begin{proof}
Consider the proof of Theorem 5.1$.$ Replace $\F$ by $\FF$ so that
${\Le[b[\X\x\Y,\F],\F]}$ is replaced by ${\Le[b[\X\x\Y,\FF],\,\FF]}.$ Then
replace the linear space ${b[\X\x\Y,\,\FF]}$ by the subset
$b_{\X^\sharp\x\Y^\sharp}[\X\x\Y,\,\FF]$, still keeping the same definition
of single tensors, so that $\!\top_{\!\X,\Y,\,\F}\sse{\Le[\,b[\X\x\Y,\F],\F]}$
is replaced by $\!\top'_{\!\X\!,\Y}\sse{b[\Le[\X,\FF]\x\Le[\Y,\FF],\,\FF]}.$
Again, ${\theta'\!:\!\X\x\Y\to\span\!\top'_{\!\X,\Y,\,\F}}$ is a bilinear map
with ${\span R(\theta')=\!\top'_{\!\X,\Y,\,\F}}.$ Thus the argument in the
proof of Theorem 5.1 still holds, associating with each bilinear map
${\phi\!:\X\x\Y\to\Z}$ the same linear transformation $\Phi$ into $\Z$ now
acting on $\span\!\top'_{\!\X\!,\Y}$.
\end{proof}

%%%%%%%%%%%%%%%%%%%%%%%%%%%  REMARK 5.1
\begin{remark}
Here is a common and useful example of such a particular case (with
${\FF=\CC}).$ Let $\X$ and $\Y$ be complex Hilbert spaces with inner products
${\<\,\cdot\,;\cdot\,\>_\X}$ and ${\<\,\cdot\,;\cdot\,\>_\Y}$, which are
sesquilinear forms (not bilinear forms)$.$ Algebraic duals $\X^\sharp$ and
$\Y^\sharp$ are now naturally replaced by topological duals $\X^*$ and $\Y^*$
of continuous linear functionals$.$ A single tensor for each
${(x,y)\in\X\x\Y}$ is usually defined in this case by
$$
(x\otimes y)(u,v)=\<x\,;u\>_\X\,\<y\,;v\>_\Y
\quad\;\hbox{for every}\;\quad
(u,v)\in\X\x\Y.
$$
(See e.g., \cite[Section II.4]{RS} and \cite{Kub1}.) The Riesz Representation
Theorem for Hilbert spaces says that $\mu$ lies in $\X^*\!$ and $\nu$ lies in
$\Y^*\!$ if and only if ${\mu(\cdot)=\<\,\cdot\,;u\>_\X}$ and
$\nu(\cdot)=$ $\<\,\cdot\,;v\>_\Y$ for some $u$ in $\X$ and $v$ in $\Y.$ Thus
identify ${\mu\in\X^*}$ and ${\nu\in\Y^*}$ with ${u\in\X}$ and ${v\in\Y}$
such that the pair ${(u,v)\in\X\x\Y}$ is identified with the pair
${(\mu,\nu)\in\X^*\x\Y^*}\!.$ Then a single tensor ${x\otimes y}$ associated
with a pair ${(x,y)\in\X\x\Y}$ is in fact a bilinear form
${x\otimes y\!:\X^*\x\Y^*\to\CC}$ in ${b[\X^*\x\Y^*,\CC]}$, which is
equivalently written as
$$
(x\otimes y)(\mu,\nu)=\mu(x)\,\nu(y)
\quad\;\hbox{for every}\;\quad
(\mu,\nu)\in\X^*\x\Y^*.
$$
\end{remark}

%%%%%%%%%%%%%%%%%%%%%%%%%%%%%%%%%%%%%%%%%%%%%%%%%%%%%%%%%  SECTION 6
\section{Final Remarks}

%%%%%%%%%%%%%%%%%%%%%%%%%%%%%%%%%%%%%%%%%%%  SUBSECTION 6.1
\subsection{Multiple Tensor Products}

It is clear how the preceding arguments (in Sec\-tions 3, 4 and 5) can be
naturally extended to cover the notion of an algebraic tensor product of a
finite collection $\{\X_i\}_{i=1}^n$ of linear spaces over the same field,
yielding a tensor product space $\bigotimes_{i=1}^n\!\X_i$ of a finite number
of linear spaces$.$ This is based on the notions of multiple Cartesian
products $\prod_{i=1}^n\!\X_i$, n-tuples, and multilinear maps as a natural
extension of Cartesian product of two linear spaces, ordered pairs, and
bilinear maps$.$ All results in Sections 3, 4 and 5 remain true (essentially
with the same statement, following similar arguments) if extended to such
multiple tensor products$.$ To extend a result on tensor product from a pair
of linear spaces to an $n$-tuple (or to an $\infty$-tuple) may be a relevant
task$.$ Sometimes this is a simple job (achieved by induction) but not
always$.$ On the other hand, what may also not be always simple is the other
way round: when a notion is initially defined for an $n$-tuple, it may be wise
to go down to a pair to see clearly what is really going on.

%%%%%%%%%%%%%%%%%%%%%%%%%%%%%%%%%%%%%%%%%%%  SUBSECTION 6.2
\subsection{Tensor Products of Banach Spaces}

We have been dealing with algebraic tensor products ${\X\otimes\Y}$ of
linear spaces $\X$ and $\Y.$ A natural follow-up is to equip the underlying
linear space ${\X\otimes\Y}$ with a norm and advance the theory of tensor
prod\-ucts to Banach spaces$.$ So a new starting point is to equip
${\X\otimes\Y}$ with a suitable norm$.$ If $\X$ and $\Y$ are Banach spaces
and $\X^*$ and $\Y^*$ are their \hbox{duals}, then let ${x\otimes y}$ and
${f\otimes g}$ be single tensors in the tensor product spaces
${\X\kern-1pt\otimes\kern-1pt\Y}$ and ${\X^*\!\otimes\kern-1pt\Y^*}\!.$
A norm $\|\cdot\|$ on ${\X\otimes\Y}$ is a {\it reasonable crossnorm}\/ if,
for every ${x\kern-1pt\in\kern-1pt\X}$, ${y\kern-1pt\in\kern-1pt\Y}$,
${f\kern-1pt\in\kern-1pt\X^*}\kern-1pt$,
${g\kern-1pt\in\kern-1pt\X^*}\kern-1pt$,
\begin{description}
\item {$\kern-12pt$\rm(a)}
$\;\|x\otimes y\|\le\|x\|\,\|y\|$,
\vskip4pt
\item {$\kern-12pt$\rm(b)}
$\;{f\otimes g}$ lies in $(\X\otimes\Y)^*\!$, $\,$and
$\|f\otimes g\|_*\le\|f\|\,\|g\|$
\quad
(where $\|\cdot\|_*$ is the norm on the dual $(\X\otimes\Y)^*$ when
$(\X\otimes\Y)$ is equipped with the norm in (a)$\kern1pt$),
\end{description}
so that ${\X^*\otimes\Y^*\sse(\X\otimes\Y)^*}\!.$ It can be verified that (i)
the above norm inequalities become identities, and (ii) when restricted to
${\X^*\otimes\Y^*}$ the norm $\|\cdot\|_*$ on ${(\X\otimes\Y)^*}$ is again a
reasonable crossnorm (with respect to ${(\X^*\otimes\Y^*)^*}).$ Two special
norms on ${\X\otimes\Y}$ are the so-called {\it injective}\/
${\|\cdot\|_{_\vee\kern-1pt}}$ and
{\it projective} $\|\cdot\|_{_\wedge\kern-1pt}$ norms,
$$
\|\digamma\|_{_\vee\kern-1pt}
=\sup_{\|f\|\le1,\,\|g\|\le1,\;f\in\X^*\!,\,g\in\Y*}
\Big|{\sum}_if(x_i)\,g(y_i)\Big|,
$$
$$
\|\digamma\|_{_\wedge\kern-1pt}
=\inf_{\{x_i\}_i,\,\{y_i\}_i,\;\digamma=\sum_ix_i\otimes y_i}
{\sum}_i\|x_i\|\,\|y_i\|,
$$
for every ${\digamma={\sum}_ix_i\otimes y_i\in\X\otimes\Y}.$ It can be shown
that (iii) these are reasonable crossnorms, and (iv) a norm $\|\cdot\|$ on
${\X\otimes\Y}$ is a reasonable crossnorm if and only if
$$
\|\digamma\|_{_\vee\kern-1pt}\le\|\digamma\|\le\|\digamma\|_{_\wedge\kern-1pt}
\quad\hbox{for every}\;\quad
\digamma\in\X\otimes\Y.
$$
Anyhow, equipped with any reasonable crossnorm, a tensor product space
${\X\otimes\Y}$ of a pair of Banach spaces $\X$ and $\Y$ is not necessarily
complete$.$ Thus one takes the completion ${\X\widehat\otimes\Y}$ of
${\X\otimes\Y}.$ For the theory of the Banach space ${\X\widehat\otimes\Y}$
the reader is referred, for instance, to \cite{Jar,DF,Rya,DFS}$.$ If $\X$ and
$\Y$ are Hilbert spaces, then ${\X\widehat\otimes\Y}$ becomes a Hilbert space
when one takes the reasonable crossnorm on ${\X\otimes\Y}$ that naturally
comes from the inner products in $\X$ and $\Y$ as in Remark 5.1, by setting
${\<x_1\otimes y_1,x_2\otimes y_2\>_{\X\otimes\Y}}
={\<x_1\otimes x_2\>_\X\,\<y_1\otimes y_2\>_\Y}$
(see, e.g., \cite{RS,Wei,Kub1}).

%%%%%%%%%%%%%%%%%%%%%%%%%%%%%%%%%%%%%%%%%%%%%%%%%%%%%%%%%% ACKNOWLEDGMENT
\section*{Acknowledgment}

The author thanks the anonymous referees for their constructive criticisms.

%%%%%%%%%%%%%%%%%%%%%%%%%%%%%%%%%%%%%%%%%%%%%%%%%%%%%%%%%  REFERENCES
\bibliographystyle{amsplain}

\end{document}